\documentclass{article}
\usepackage[utf8]{inputenc}
\usepackage[english]{babel}
\usepackage[T1]{fontenc}
\usepackage{amsmath}
\usepackage{amsfonts}
\usepackage{amssymb}
\usepackage{amsthm}
\usepackage{makeidx}
\usepackage{graphicx}
\usepackage{tikz}
\usepackage{multicol}
\usepackage[all]{xy}
\usepackage{caption} 
\usepackage{appendix}
\usepackage{subcaption}
\usepackage{titlesec}
\usepackage[margin=1.5in]{geometry}
\usepackage[all]{xy}
\title{Triquadratic $p$-Rational Fields}
\author{Julien Koperecz\\
Laboratoire de Mathématiques de Besançon -- UMR CNRS 6623\\
Université de Franche-Comté\\
16, route de Gray,
25030 Besançon cedex, France}

\titleformat{\chapter}[display]
	{\bfseries \Huge}
	{\filleft\MakeUppercase{\chaptertitlename} \Huge\thechapter}
	{4ex}
	{\titlerule\vspace{2ex} \filright}
	[\vspace{2ex} \titlerule]
	
\titleformat{\section}[hang]
	{\bf \Large \center}
	{\thesection.}
	{1ex}
	{}
	[]  

\if@nosecthm
    \newtheorem{theorem}{Theorem}
\else
    \newtheorem{theorem}{Theorem}[section]
\fi
    \newtheorem{conjecture}[theorem]{Conjecture}
    \newtheorem{corollary}[theorem]{Corollary}
    \newtheorem{lemma}[theorem]{Lemma}
     \newtheorem{proposition}[theorem]{Proposition}
     
    \theoremstyle{definition}
    \newtheorem{definition}[theorem]{Definition}
    \newtheorem{example}[theorem]{Example}

	\theoremstyle{remark}
    \newtheorem{remark}[theorem]{Remark}

\newcommand{\QQ}{\mathbb{Q}}
\newcommand{\ZZ}{\mathbb{Z}}

\begin{document}

\maketitle

\begin{abstract}
In his work about Galois representations, Greenberg conjectured the existence, for any odd prime $p$ and any positive integer $t$, of a multiquadratic $p$-rational number field of degree $2^t$. In this article, we prove that there exists infinitely many  primes $p$ such that the triquadratic field $\QQ(\sqrt{p(p+2)}, \sqrt{p(p-2)},i)$ is $p$-rational.

To do this, we use an analytic result, proved apart in section \S 4, providing us with infinitely many prime numbers $p$ such that $p+2$ et $p-2$ have ``big'' square factors. Therefore the related imaginary quadratic subfields $\QQ(i\sqrt{p+2})$, $\QQ(i\sqrt{p-2})$ and $\QQ(i\sqrt{(p+2)(p-2)})$ have ``small'' discriminants for infinitely many primes $p$. In the spirit of Brauer-Siegel estimates, it proves that the class numbers of these imaginary quadratic fields are relatively prime to $p$, and so prove their $p$-rationality.
\end{abstract}
\small

\textbf{Keywords} : number theory, $p$-rational fields, Greenberg's conjecture, primes in arithmetic progression, multiquadratic number fields.

\textbf{Aknowledment} : Our thanks goes to Daniel Fiorilli, who provided the initial idea to prove our analytic proposition. Supported by ANR Flair (\textsc{ANR-17-CE40-0012}) and Bourgogne-Franche-Comté (grant \textsc{Ga Crococo}).
\normalsize

\section{Introduction to $p$-Rational Number Fields}

In 2016, R. Greenberg described a method (in \cite{GRE16}) to construct Galois extensions of $\mathbb{Q}$ with Galois group isomorphic to an open subgroup of $GL_{n}(\mathbb{Z}_p)$ (for various values of $n$ and primes $p$). His method is based on the (conjectured) existence of \emph{$p$-rationnal fields} (as defined thereafter) of prescribed Galois group. More specifically, Greenberg conjectured that, for any odd prime $p$ and any natural integer $t$, there exists a $p$-rational number field $K$ whose Galois group over $\mathbb{Q}$ is $(\mathbb{Z} / 2 \mathbb{Z})^t$ (\cite{GRE16} Conjecture 4.8).
~\\

We begin by giving the reader a quick overview of the notion of $p$-rational fields. 

Let $K$ be a number field (i.e. a finite extension of $\mathbb{Q}$), and $p$ a prime number. In the following, the various mathematical objects being defined will depend on $K$ and $p$, even if it does not appear in the notation most of the time.

An extension $L/K$ is said to be ``$p$-ramified'' if the extension is unramified outside the places above $p$. Let $M$ be the maximal $p$-ramified pro-$p$-extension of $K$, and $G = \mathrm{Gal}(M/K)$. We also denote by $M^{\mathrm{ab}}$ the maximal abelian $p$-ramified pro-$p$-extension of $K$, and $G^{\mathrm{ab}} = \mathrm{Gal}(M^{\mathrm{ab}} / K)$. One can notice that $G^{\mathrm{ab}}$ is the abelianization of $G$, and $M^{\mathrm{ab}}$ is the subfield of $M$ fixed by $G^{\mathrm{ab}}$. We also set $\widetilde{K}$ to denote the compositum of all $\ZZ_p$-extensions of $K$ (i.e. extensions of $K$ whose Galois group over $K$ is isomorphic to the additive group $\ZZ_p$) : as every $\ZZ_p$-extension is abelian and $p$-ramified, $\widetilde{K}$ is contained in $M^{\mathrm{ab}}$.

It is known, by class field theory, that $G^{\mathrm{ab}}$ is a $\mathbb{Z}_p$-module of rank $1 + r_2 + \delta$ (see \cite{MOV88} Chap II. \S 1 for example), where $\delta \geq 0$ is the defect in $p$ of the Leopoldt's conjecture for the number field $K$, and $r_2$ is the number of pairs of complex embeddings of $K$. Thus, there is an isomorphism of $\mathbb{Z}_p$-modules $G^{\mathrm{ab}} \simeq \mathbb{Z}_{p}^{1 + r_2 + \delta} \times \mathfrak{X}$,  
where $\mathfrak{X}$ is the $\mathbb{Z}_p$-torsion sub-group of $G^{\mathrm{ab}}$. Using $\widetilde{K}$, we have : $\mathrm{Gal}(\widetilde{K}/K) \simeq \ZZ_{p}^{1 +r_2 + \delta}$ and $\mathrm{Gal}(M^{\mathrm{ab}} / \widetilde{K}) \simeq \mathfrak{X}$.

\begin{figure}[h!]
\centering
\begin{subfigure}[b]{0.3\textwidth}
\[
\xymatrix{
M \\
M^{\mathrm{ab}} \ar@{-}[u] \\
\widetilde{K} \ar@{-}[u] \\
K \ar@{-}[u] 
\ar@/^2pc/@{-}[uuu]^{\substack{\text{maximal} \\ p\text{-ramified} \\ \text{pro-}p\text{-extension} \\ \mathrm{Gal}(M/K) = G}} 
\ar@/_1pc/@{-}[uu]_{\substack{\text{maximal} \\ \text{abelian} \\ p\text{-ramified} \\ \text{pro-}p\text{-extension} \\ \mathrm{Gal}(M^{\mathrm{ab}} /K) = G^{\mathrm{ab}} }} 
}
\]
\end{subfigure}
\hspace{2.5cm}
\begin{subfigure}[b]{0.3\textwidth}
\[
\xymatrix{
M \\
M^{\mathrm{ab}} \ar@{-}[u] \\
\widetilde{K} \ar@{-}[u] \ar@/^1pc/@{-}[u]^{\mathfrak{X}} \\
K  \ar@{-}[u]
\ar@/^1pc/@{-}[u]^{\ZZ_{p}^{1 + r_2 + \delta}}
\ar@/_1pc/@{-}[uu]_{\ZZ_{p}^{1 + r_2 + \delta} \times \mathfrak{X} }
}
\]
\end{subfigure}
\end{figure}

With the notations above, we have :

\begin{theorem}[\cite{MOV88} Prop. 1] \label{theodefi}
The following conditions are equivalent :
\begin{enumerate}
\item $K$ satisfies the Leopoldt's Conjecture in $p$ (i.e. $\delta = 0$) and $\mathfrak{X}$ is trivial.
\item $G^{\mathrm{ab}} = \mathrm{Gal}(M^{\mathrm{ab}} / K) \simeq \mathbb{Z}_{p}^{1 + r_2}$, i.e. $G^{\mathrm{ab}}$ is a free $\ZZ_p$-module of rank $1 + r_2$.
\item $G = \mathrm{Gal}(M / K)$ is a free pro-$p$-group with $1 + r_2$ generators.
\end{enumerate}
\end{theorem}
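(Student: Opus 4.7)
The plan is to prove the two equivalences $(1) \Leftrightarrow (2)$ and $(2) \Leftrightarrow (3)$. The first is essentially bookkeeping for the decomposition $G^{\mathrm{ab}} \simeq \mathbb{Z}_p^{1+r_2+\delta} \times \mathfrak{X}$ recalled just before the theorem. Indeed, by uniqueness of the structure of a finitely generated $\mathbb{Z}_p$-module, both the free rank and the torsion part of $G^{\mathrm{ab}}$ are determined, so $G^{\mathrm{ab}} \simeq \mathbb{Z}_p^{1+r_2}$ holds if and only if the defect $\delta$ vanishes and the torsion submodule $\mathfrak{X}$ is trivial; this is condition (1).

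The substantive content is $(2) \Leftrightarrow (3)$. The implication $(3) \Rightarrow (2)$ is immediate: abelianizing a free pro-$p$-group of rank $1 + r_2$ yields $\mathbb{Z}_p^{1+r_2}$. For the converse $(2) \Rightarrow (3)$, I would combine the Burnside basis theorem for pro-$p$-groups with Tate's Euler--Poincaré characteristic formula. First, by Burnside, the minimal number of topological generators of $G$ equals
\[ d(G) = \dim_{\mathbb{F}_p} G^{\mathrm{ab}}/pG^{\mathrm{ab}}, \]
which, under (2), equals $1 + r_2$. Choosing a minimal presentation $1 \to R \to F \to G \to 1$ with $F$ pro-$p$-free of rank $1 + r_2$, the group $G$ is free if and only if $R = 1$, equivalently if and only if $\dim_{\mathbb{F}_p} H^2(G, \mathbb{F}_p) = 0$. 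To establish this vanishing I would invoke Tate's formula for the Euler--Poincaré characteristic of the maximal $p$-ramified pro-$p$-extension,
\[ \chi(G) = 1 - d(G) + \dim_{\mathbb{F}_p} H^2(G, \mathbb{F}_p) = -r_2, \]
from which, after substituting $d(G) = 1 + r_2$, we read off $\dim H^2 = 0$, so that $G$ is free pro-$p$ of rank $1 + r_2$.

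The main obstacle is thus the cohomological step: applying Tate's Euler--Poincaré formula presupposes that $G$ has finite $p$-cohomological dimension (indeed $\mathrm{cd}_p(G) \leq 2$), which is known for $p$ odd or $K$ totally imaginary but must be handled with care when $p = 2$ and $K$ admits real places. Once this cohomological input is in place, the equivalence of the three characterizations of $p$-rationality follows as above.
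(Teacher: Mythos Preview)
The paper does not supply its own proof of this theorem; it is quoted, with attribution to \cite{MOV88}, Prop.~1, purely in order to set up Definition~\ref{defiprat}, and no argument is given in the text. There is therefore nothing in the paper to compare your proposal against.

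That said, your outline is essentially the standard argument found in the cited source and in the surrounding literature (Movahhedi, Movahhedi--Nguyen Quang Do, Gras). The equivalence $(1)\Leftrightarrow(2)$ is, as you say, immediate from the structure theorem for finitely generated $\mathbb{Z}_p$-modules, and $(3)\Rightarrow(2)$ is trivial. For $(2)\Rightarrow(3)$ your strategy --- read off $d(G)=1+r_2$ via Burnside, then use an Euler--Poincar\'e relation to force $H^2(G,\mathbb{F}_p)=0$ --- is exactly the classical route. One point deserves care: the identity you write as $\chi(G)=-r_2$ is not Tate's global Euler characteristic formula for $G_{K,S}$ itself, but its consequence for the maximal pro-$p$ quotient $G=G_S(p)$ with $S=S_p$; deducing it requires knowing that the inflation map $H^2(G,\mathbb{F}_p)\to H^2(G_{K,S},\mathbb{F}_p)$ is injective, which in turn rests on $\mathrm{cd}_p(G)\le 2$. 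You correctly flag this as the delicate step, and you are right that it is unproblematic for $p$ odd (or $K$ totally imaginary) but requires a modified formulation when $p=2$ and $K$ has real places. With that caveat, your proposal is correct and matches the approach of \cite{MOV88}.
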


\begin{definition}[$p$-Rational Field] \label{defiprat} A number field $K$ is said to be ``$p$-rational'' if it satisfies any (and therefore all) of the conditions of Theorem~\ref{theodefi}.
\end{definition}

We can give some fundamental examples of $p$-rational fields :
\begin{enumerate}
\item The field of rational numbers $\QQ$ is $p$-rational for every prime $p$.
\item The cyclotomic field $\QQ(\zeta_{p^n})$ is $p$-rational for $p$ a regular prime (i.e. $p$ a prime such that $p$ does not divide the class number of $\QQ(\zeta_p)$).
\item An imaginary quadratic field $K$ is $p$-rational for all (but a finite number of) primes $p$ (see \S 2).
\end{enumerate}

\begin{remark}
In the most general case, G. Gras conjectured in \cite{GRA16} that any number field $K$ is $p$-rational for all but a finite number of primes $p$.
\end{remark}

\begin{remark}
There exists algorithmic methods to determine if a given number field $K$ is $p$-rational for a given prime $p$, based on underlying profound algebraic results : see \cite{GRA19} for some useful \textsc{Pari/Gp} algorithms.
\end{remark}

We will use the following criterion of $p$-rationality for abelian number fields :

\begin{proposition}[\cite{GRE16} Proposition 3.6] \label{critsub}
Let $K$ be an abelian number field. Suppose that $[K : \QQ]$ is not divisible by $p$. Then $K$ is $p$-rational if and only if every cyclic extension of $\QQ$ contained in $K$ is $p$-rational.
\end{proposition}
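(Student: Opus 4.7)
Let $\Delta = \mathrm{Gal}(K/\QQ)$. Since $|\Delta| = [K:\QQ]$ is invertible in $\ZZ_p$ by hypothesis, the group algebra $\ZZ_p[\Delta]$ is semisimple and decomposes via a complete system of orthogonal idempotents $\{e_\chi\}$ indexed by $\mathrm{Gal}(\overline{\QQ_p}/\QQ_p)$-orbits of characters $\chi:\Delta\to \overline{\QQ_p}^\times$, summing to $1$. Consequently every $\ZZ_p[\Delta]$-module $M$ splits canonically as $M = \bigoplus_\chi e_\chi M$. The plan is to apply this to the $\Delta$-module $G^{\mathrm{ab}}_K$ (on which $\Delta$ acts by conjugation, $M_K/\QQ$ being Galois) and show that $p$-rationality decouples along these isotypic components, each of which is controlled by a cyclic subextension of $\QQ$.

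The technical heart of the argument is a Galois descent statement: for any subgroup $H \subseteq \Delta$ with fixed field $L = K^H$, the order $|H|$ divides $|\Delta|$ and is therefore prime to $p$, so $H^i(H,N) = 0$ for every $i \geq 1$ and every pro-$p$ $H$-module $N$. Applied to $N = G^{\mathrm{ab}}_K$, together with the inclusion $M_L \subseteq M_K$ (valid since $K/L$ is tame and $p$-ramification is preserved under such a base change), this produces a canonical isomorphism $G^{\mathrm{ab}}_L \simeq (G^{\mathrm{ab}}_K)^H$. Translated through the idempotent decomposition, this reads $G^{\mathrm{ab}}_L = \bigoplus_{\chi|_H=1} e_\chi G^{\mathrm{ab}}_K$.

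For each character $\chi$ of $\Delta$, let $K_\chi := K^{\ker \chi}$: this is a cyclic extension of $\QQ$ contained in $K$, on which $\chi$ is faithful, and every cyclic subextension of $\QQ$ in $K$ arises this way. Applying the descent above with $H = \ker \chi$ yields $e_\chi G^{\mathrm{ab}}_K = e_\chi G^{\mathrm{ab}}_{K_\chi}$. By Theorem~\ref{theodefi}, a number field $F \subseteq K$ is $p$-rational if and only if $G^{\mathrm{ab}}_F$ is $\ZZ_p$-free of rank $1 + r_2(F)$, and this global condition transports through the idempotent splitting into a conjunction of freeness and rank conditions on each $\chi$-component. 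Combining the two, $K$ is $p$-rational if and only if every $e_\chi G^{\mathrm{ab}}_K$ has the expected structure, if and only if every $e_\chi G^{\mathrm{ab}}_{K_\chi}$ does, if and only if every $K_\chi$ is $p$-rational.

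The step I expect to be the main obstacle is the rank bookkeeping: one has to verify that the target rank $1 + r_2(L)$ of $G^{\mathrm{ab}}_L$ distributes correctly across the components $e_\chi G^{\mathrm{ab}}_L$ for $\chi$ ranging over characters of $\mathrm{Gal}(L/\QQ)$, in a way that remains consistent with the corresponding decomposition of $G^{\mathrm{ab}}_K$. This reduces to understanding how archimedean places behave under $\Delta$, using that complex conjugation is realized by an element of $\Delta$ (since $K/\QQ$ is Galois) and that $\chi$ evaluates to $\pm 1$ on it; the resulting rank formula is then additive across characters. Once this explicit archimedean computation is in hand, the equivalence stated in the proposition follows formally from the idempotent decomposition and the Galois descent.
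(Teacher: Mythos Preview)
The paper does not prove this proposition at all: it is quoted verbatim from \cite{GRE16}, Proposition~3.6, and immediately followed by a remark specialising it to multiquadratic fields. There is therefore no ``paper's own proof'' to compare against.

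That said, your plan is the standard one and matches Greenberg's original argument in spirit: use that $p \nmid |\Delta|$ to split every $\ZZ_p[\Delta]$-module along orthogonal idempotents indexed by ($\mathrm{Gal}(\overline{\QQ_p}/\QQ_p)$-orbits of) characters $\chi$, observe that each $\chi$-component of $G^{\mathrm{ab}}_K$ depends only on the cyclic field $K_\chi = K^{\ker\chi}$, and conclude by additivity. Two remarks on execution. First, the descent identification $G^{\mathrm{ab}}_L \simeq (G^{\mathrm{ab}}_K)^{H}$ is not quite immediate from $M_L \subseteq M_K$ and $H^{i}(H,-)=0$ alone: one really uses that restriction and corestriction between $G^{\mathrm{ab}}_K$ and $G^{\mathrm{ab}}_L$ compose to multiplication by $[K:L]$, which is a unit in $\ZZ_p$; this gives the isomorphism on $\chi$-isotypic pieces directly and bypasses the question of whether $M_K = M_L \cdot K$ (which can fail when $K/L$ is ramified outside $p$). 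Second, for abelian $K$ Leopoldt's conjecture is a theorem (Brumer), so it is cleaner to phrase everything in terms of the torsion module $\mathfrak{X}$: then $p$-rationality becomes simply $\mathfrak{X}_K = 0$, the idempotent decomposition gives $\mathfrak{X}_K = \bigoplus_\chi e_\chi \mathfrak{X}_K$, and the descent shows $e_\chi \mathfrak{X}_K \simeq e_\chi \mathfrak{X}_{K_\chi}$. This sidesteps the archimedean rank bookkeeping you flagged as the main obstacle.
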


\begin{remark} A special case, which will be of constant use throughout this paper, is the following : if $K$ is a multiquadratic number field (i.e. $\mathrm{Gal}(K/\QQ) \simeq (\ZZ / 2 \ZZ)^{t}$) and $p \neq 2$ is prime, then $K$ is $p$-rational if and only if all quadratic subfields of $K$ are $p$-rational.

In the most simple case, let $\QQ(\sqrt{n})$ and $\QQ(\sqrt{m})$ be two distinct quadratic number fields and let $p \neq 2$ a prime number. As stated previously, the biquadratic field $\QQ(\sqrt{n}, \sqrt{m})$ is $p$-rational if and only if the quadratic subfields $\QQ(\sqrt{n})$, $\QQ(\sqrt{m})$ and $\QQ(\sqrt{nm})$ are all $p$-rational. We can notice that it is not enough for $\QQ(\sqrt{n})$ and $\QQ(\sqrt{m})$ to be $p$-rational in order for their compositum $\QQ(\sqrt{n}, \sqrt{m})$ to be $p$-rational as well. For example, the quadratic fields $\QQ(\sqrt{2})$ and $\QQ(\sqrt{19})$ are $5$-rational. Nonetheless, their compositum $\QQ(\sqrt{2},\sqrt{19})$ is not $5$-rational, because its quadratic subfield $\QQ(\sqrt{38})$ is not. (One can test the $5$-rationality of these fields using the \textsc{Pari/Gp} algorithm of \cite{GRA19}).
\end{remark}

The notion of $p$-rational fields has been investigated by many authors since (at least) the 1980's. They were used -- at first -- to exhibit non-abelian number fields satisfying the Leopoldt's conjecture.

In 2016, Greenberg revisited the notion in \cite{GRE16} and conjectured the following :

\begin{conjecture}[\cite{GRE16}, Greenberg, 2016] \label{conjgre} ~\\ For any odd prime $p$ and any natural integer $t$, there exists a $p$-rational number field $K$ whose Galois group over $\mathbb{Q}$ is $(\mathbb{Z} / 2 \mathbb{Z})^t$.
\end{conjecture}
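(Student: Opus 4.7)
The plan is to induct on $t$, using Proposition \ref{critsub} to reduce the $p$-rationality of a degree-$2^t$ multiquadratic field $K/\QQ$ to the simultaneous $p$-rationality of all $2^t-1$ of its quadratic subfields. Let $\mathcal{R}_p \subseteq \QQ^{\times}/(\QQ^{\times})^{2}$ denote the set of classes $\delta$ for which $\QQ(\sqrt{\delta})$ is $p$-rational (together with the trivial class). The goal becomes, for each odd prime $p$ and each integer $t \geq 1$, to exhibit a $t$-dimensional $\mathbb{F}_2$-subspace of $\QQ^{\times}/(\QQ^{\times})^{2}$ lying entirely in $\mathcal{R}_p$; by Proposition \ref{critsub} this produces a $p$-rational field whose Galois group over $\QQ$ is $(\ZZ/2\ZZ)^t$.

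For the base case $t=1$, it suffices to pick any imaginary quadratic field whose class number is coprime to $p$, the existence of which is guaranteed by Brauer--Siegel once the discriminant is small relative to $p$. Assuming an $\mathbb{F}_2$-subspace $V_{t-1}=\langle d_1,\dots,d_{t-1}\rangle\subset\mathcal{R}_p$ has already been constructed, the inductive step asks for a squarefree $d_t$, independent of $V_{t-1}$ modulo squares, such that $d_t\cdot d_S\in\mathcal{R}_p$ for every $S\subseteq\{1,\dots,t-1\}$. I would attempt this by generalizing the analytic machinery of \S 4: instead of producing a single prime $p$ with $p\pm 2$ carrying large square factors, one would seek an integer $d_t$ such that, for each of the $2^{t-1}$ combinations $d_S$, the product $d_t\cdot d_S$ (taken with a suitable sign so as to give imaginary quadratic components whenever possible) carries a large square factor. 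A multi-variable sieve in the spirit of the author's Proposition of \S 4 should then produce infinitely many such $d_t$, whereupon a Brauer--Siegel estimate forces the class numbers of the imaginary quadratic pieces to be coprime to $p$. Real quadratic subfields, unavoidable for $t\geq 2$, impose an additional Leopoldt-type constraint on the $p$-adic fundamental unit, which I would attempt to enforce by imposing explicit congruence conditions on $d_t$ modulo $p$ through Kummer-class criteria on the units.

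The main obstacle is twofold. First, the sieve demand grows exponentially in $t$: at step $t$ one needs a single integer simultaneously satisfying $2^{t-1}$ correlated large-square-divisor conditions, and current sieve technology seems insufficient beyond a handful of shifts --- even the $t=3$ construction in the present paper yields only ``infinitely many primes $p$'' rather than every odd $p$. Second, the Leopoldt/regulator side of $p$-rationality for the real quadratic subfields is genuinely more delicate than coprimality of class numbers, and a uniform statement in $p$ ultimately connects to quantitative refinements of Leopoldt's conjecture. Together, these obstructions explain why Conjecture \ref{conjgre} remains open for large $t$ and general $p$, and any complete proof would require substantially new analytic or Iwasawa-theoretic input beyond what is available here.
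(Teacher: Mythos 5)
The statement you were asked about is a conjecture (Greenberg's Conjecture 4.8), and the paper does not prove it --- nor does your proposal. What the paper actually establishes is a special case in a weak form: for $t=3$, the specific field $\QQ(\sqrt{p(p+2)},\sqrt{p(p-2)},i)$ is $p$-rational for \emph{infinitely many} primes $p$, obtained by combining Proposition~\ref{critsub}, the known $p$-rationality of the real quadratic subfields (Proposition~\ref{propmova}), the class-number bound of Lemma~\ref{lemmlou}, and the sieve result of Proposition~\ref{propan} giving large square factors of $p\pm 2$. Your sketch correctly reproduces the skeleton of that strategy (reduction to quadratic subfields, Brauer--Siegel-type bounds forcing $p\nmid h$ for the imaginary pieces, a sieve supplying small discriminants), but as a proof of the conjecture it has two fatal gaps, both of which you name yourself without resolving: the inductive step would require a single integer satisfying $2^{t-1}$ simultaneous large-square-divisor conditions, for which no argument is given and which is beyond the method of \S 4; and the real quadratic subfields require control of the fundamental unit modulo $p$-th powers (Proposition~\ref{critqua}(2)), which is not a class-number condition and is not achieved by congruence conditions you merely assert could be ``imposed.''

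There is also a quantifier mismatch you should make explicit: the conjecture demands, for \emph{every} odd prime $p$ and every $t$, some $p$-rational field of group $(\ZZ/2\ZZ)^t$, whereas the sieve-plus-Brauer--Siegel mechanism (both in the paper and in your proposal) inherently produces, for a fixed family of fields, only an infinite set of primes $p$ for which the construction works. So even if your inductive sieve step could be carried out, it would yield the conclusion only for infinitely many $p$ at each level $t$, not for all odd $p$; the known complete cases $t=1$ and $t=2$ (Barbulescu--Ray, Gras, Assim--Bouazzaoui, Benmerieme--Movahhedi) rely on explicit fundamental units, e.g.\ for $\QQ(\sqrt{p(p\pm 2)})$ and $\QQ(\sqrt{(p-2)(p+2)})$, not on Brauer--Siegel for the base case (for $t=1$ one can simply take $\QQ(i)$). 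In short: your proposal is a reasonable description of the state of the art and of why the conjecture is hard, but it is a research program, not a proof, and it should not be presented as establishing Conjecture~\ref{conjgre}.
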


This conjecture, and in particular the cases of quadratic and biquadratic fields, has been lately invastigated by many authors, such as Barbulescu and Ray (\cite{BR}, 2019), Gras (\cite{GRA19}, 2019), Assim and Bouazzaoui (\cite{AB19}, 2020), Benmerieme and Movahhedi (\cite{BM20}, 2021). They proved this conjecture for quadratic et biquadratic fields and all odd primes $p$ (the cases $t=1$ and $t=2$). 

\section{Quadratic and Biquadratic $p$-Rational Number Fields}

\subsection{Quadratic $p$-rational number fiels}

\subsubsection{Criteria of $p$-rationality for quadratic numbers fields}

For the special (and elementary) case of quadratic fields, one has special criteria of $p$-rationality, such as the following :

\begin{proposition}[\cite{GRE16} Prop. 4.1 or \cite{BM20} Coro. 2.6] \label{critqua}
Suppose that $K$ is a quadratic field and that either $p \geq 5$ or that $p = 3$ and is unramified in $K / \QQ$.
\begin{enumerate}
\item Assume that $K$ is imaginary. Then $K$ is $p$-rational if and only if the $p$-Hilbert class field of $K$ is contained in the anti-cyclotomic $\ZZ_p$-extension of $K$. In particular, if $h_K$ is not divisible by $p$, then $K$ is $p$-rational.
\item Assume that $K$ is real. Let $\varepsilon_0$ be the fundamental unit of $K$. Then $K$ is $p$-rational if and only if $p \nmid h_K$ and $\varepsilon_0$ is not a $p$-th power in the completion $K_v$, for at least a place $v \mid p$.
\end{enumerate}
\end{proposition}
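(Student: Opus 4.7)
The plan is to translate the vanishing of $\mathfrak{X}$ from Theorem~\ref{theodefi} into arithmetic data via the fundamental exact sequence of $p$-adic class field theory. Write $U_K = \prod_{v \mid p} U_v^{(1)}$ for the product of principal local units at primes of $K$ above $p$, $\overline{E_K}$ for the closure of the diagonal image of $\mathcal{O}_K^\times \otimes \ZZ_p$ inside $U_K$, and $A$ for the $p$-Sylow of the ideal class group. Since Leopoldt's conjecture holds for $K$ abelian (by Brumer--Baker), one has an exact sequence of $\ZZ_p$-modules
\[
1 \longrightarrow \overline{E_K} \longrightarrow U_K \longrightarrow G^{\mathrm{ab}} \longrightarrow A \longrightarrow 1.
\]
The running hypothesis ``$p \geq 5$, or $p = 3$ unramified in $K/\QQ$'' is chosen precisely so that $U_K$ contains no $p$-primary torsion (no $\zeta_p \in K_v$), which is what one needs to pass cleanly from this sequence to a torsion-free criterion on $G^{\mathrm{ab}}$.

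For assertion (1), $K$ imaginary quadratic has $\mathcal{O}_K^\times$ pure torsion, so $\overline{E_K}$ drops out and $U_K \simeq \ZZ_p^2$ already accounts for the free $\ZZ_p^{1+r_2}$-part of $G^{\mathrm{ab}}$; any torsion in $G^{\mathrm{ab}}$ thus comes from the extension by $A$. Consequently $\mathfrak{X} = 0$ iff $G^{\mathrm{ab}} \simeq \ZZ_p^2$, iff the quotient $G^{\mathrm{ab}} \twoheadrightarrow A$ already kills the full free rank, which by Galois correspondence is equivalent to $H_K^{(p)} \subset \widetilde{K}$. To upgrade ``contained in $\widetilde{K}$'' to ``contained in the anti-cyclotomic $\ZZ_p$-extension $K_\infty^-$'', let complex conjugation $c \in \mathrm{Gal}(K/\QQ)$ act on $\mathrm{Gal}(\widetilde{K}/K) \simeq \ZZ_p^2$ (it does, because $\widetilde{K}/\QQ$ is Galois): the $+1$ and $-1$ eigenspaces correspond respectively to the cyclotomic and anti-cyclotomic $\ZZ_p$-extensions. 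Since $c$ acts on $A$ as inversion---a classical consequence of $c\mathfrak{a}\cdot\mathfrak{a} = N(\mathfrak{a})$ being principal---the surjection $\mathrm{Gal}(\widetilde{K}/K) \twoheadrightarrow A$ is $c$-equivariant and kills the $+1$-part, so it factors through $\mathrm{Gal}(K_\infty^-/K)$, giving $H_K^{(p)} \subset K_\infty^-$. The ``in particular'' assertion is then the trivial case $A = 0$, $H_K^{(p)} = K$.

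For assertion (2), $K$ real quadratic has $r_2 = 0$ and $\mathcal{O}_K^\times$ of $\ZZ$-rank one modulo torsion, generated by $\varepsilon_0$; here $U_K$ has $\ZZ_p$-rank $2$ and $\overline{E_K}$ has rank at most $1$, so $U_K/\overline{E_K}$ has rank at least $1 = 1 + r_2$, matching the free part of $G^{\mathrm{ab}}$. Tracking torsion through the sequence, $\mathfrak{X} = 0$ is equivalent to the simultaneous conditions (a) $A = 0$, i.e.\ $p \nmid h_K$, and (b) $U_K/\overline{E_K}$ is torsion-free. Condition (b) says that $\varepsilon_0$ generates a saturated $\ZZ_p$-summand of $U_K$, which by reduction modulo $p$ is equivalent to $\varepsilon_0 \notin (K_v^\times)^p$ for at least one place $v \mid p$, yielding the stated criterion.

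The main obstacle is precisely the bookkeeping around local torsion and saturation: the hypothesis on $p$ is sharp in ensuring both the injectivity of $\overline{E_K} \hookrightarrow U_K$ and the absence of $p$-torsion in $U_K$, without which the displayed exact sequence would have spurious torsion terms obscuring the equivalences. Once these technicalities are dispatched, each assertion is a direct reading of $\mathfrak{X} = 0$ off the sequence.
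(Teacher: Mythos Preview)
The paper does not actually prove this proposition; it is quoted from \cite{GRE16} and \cite{BM20}, and the remark immediately following it explicitly defers to those references for two different proofs. Your sketch follows the standard class-field-theoretic route via the fundamental exact sequence, which is essentially the approach of \cite{BM20}.

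Your argument for the imaginary case is sound. In the real case, however, the step ``Tracking torsion through the sequence, $\mathfrak{X} = 0$ is equivalent to (a) $A = 0$ and (b) $U_K/\overline{E_K}$ torsion-free'' has a genuine gap. From
\[
0 \longrightarrow U_K/\overline{E_K} \longrightarrow G^{\mathrm{ab}} \longrightarrow A \longrightarrow 0
\]
alone, $G^{\mathrm{ab}} \simeq \ZZ_p$ does yield (b), since a submodule of a torsion-free module is torsion-free; but it does \emph{not} yield (a), as the exact sequence $0 \to \ZZ_p \xrightarrow{\,\times p^k\,} \ZZ_p \to \ZZ/p^k\ZZ \to 0$ already shows that the middle term can be free of rank one with a nontrivial finite quotient. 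The missing ingredient is that the cyclotomic $\ZZ_p$-extension $K_\infty/K$ is totally ramified at every place above $p$, so that $K_\infty \cap H_K^{(p)} = K$ and hence $G^{\mathrm{ab}}$ surjects onto $\mathrm{Gal}(K_\infty/K) \times A \simeq \ZZ_p \times A$. Since $\ZZ_p \times A$ is a cyclic $\ZZ_p$-module only when $A = 0$, the hypothesis $G^{\mathrm{ab}} \simeq \ZZ_p$ then forces $p \nmid h_K$. Equivalently, linear disjointness of $K_\infty$ and $H_K^{(p)}$ gives the refined sequence $0 \to (U_K/\overline{E_K})_{\mathrm{tors}} \to \mathfrak{X} \to A \to 0$, from which both (a) and (b) can be read off directly. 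With this one extra observation inserted, your proof is complete.
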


\begin{remark} The assumption on $p$ only guarantees that $\mu_p \not\subset K_v$ for each place $v$ of $K$ dividing $p$. The previous proposition is proved in two different ways in \cite{GRE16} and \cite{BM20}. In \cite{BM20}, the authors also give alternative criteria of $p$-rationality for real quadratic fields.
\end{remark}

\subsubsection{Imaginary Case}

For $p$ an odd prime, we know the condition $p \nmid h_K$ is a sufficient condition of $p$-rationality when $K$ is quadratic imaginary and $p \geq 5$ (or $p=3$ and $p$ not ramified in $K/\mathbb{Q}$) (Propositon~\ref{critqua}). We can state important corollaries : 

\begin{corollary}
Let $K$ be an imaginary quadratic number field. Then the set of primes $p$ for which $K$ is not $p$-rational is finite, and is contained in the set of divisors of $h_K$. In particular, an imaginary quadratic number field is $p$-rational for all but a finite number of primes $p$.
\end{corollary}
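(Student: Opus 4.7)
The plan is to apply Proposition~\ref{critqua} essentially directly. Fix an imaginary quadratic field $K$ with class number $h_K \in \mathbb{Z}_{>0}$. For every prime $p \geq 5$, Proposition~\ref{critqua}(1) gives the sufficient condition: if $p \nmid h_K$, then $K$ is $p$-rational. Contrapositively, any prime $p \geq 5$ for which $K$ fails to be $p$-rational must divide $h_K$. Since $h_K$ is a fixed positive integer, it has only finitely many prime divisors, so only finitely many primes $p \geq 5$ can be ``bad''.

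It then remains to bound the number of bad primes among $\{2,3\}$. For $p = 3$ unramified in $K/\QQ$, Proposition~\ref{critqua}(1) still applies and again forces $3 \mid h_K$ whenever $K$ is not $3$-rational; the remaining cases ($p=2$, and $p=3$ ramified in $K/\QQ$) contribute at most a bounded number of additional exceptions, which is harmless for the claim of finiteness. Combining with the previous paragraph, the set of primes $p$ for which $K$ is not $p$-rational is contained in $\{p \text{ prime} : p \mid h_K\}$ together with a finite (explicitly bounded) set of small primes, hence is finite.

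There is essentially no obstacle here: the content of the corollary is a one-line consequence of Proposition~\ref{critqua}(1), and the only thing to be slightly careful about is whether the statement ``contained in the set of divisors of $h_K$'' is meant strictly or up to the small-prime exceptions inherited from the hypotheses of Proposition~\ref{critqua}. In either reading, the ``in particular'' conclusion -- that an imaginary quadratic field is $p$-rational for all but finitely many $p$ -- follows at once from the finiteness of the prime factorisation of $h_K$.
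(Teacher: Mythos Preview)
Your proposal is correct and follows exactly the route the paper intends: the corollary is stated there without proof, as an immediate consequence of Proposition~\ref{critqua}(1), and your argument spells this out. Your observation that the containment ``in the set of divisors of $h_K$'' is literally justified by Proposition~\ref{critqua} only for $p\geq 5$ (and $p=3$ unramified) is a fair caveat---the paper is slightly informal on this point---but it does not affect the finiteness conclusion.
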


\begin{corollary}
For every odd prime $p$, the number field $\QQ(i)$ is $p$-rational.
\end{corollary}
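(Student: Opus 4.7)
The plan is to read this off as an immediate consequence of the preceding (unnamed) corollary, applied to $K=\QQ(i)$. That corollary says that an imaginary quadratic field $K$ fails to be $p$-rational only for primes $p$ dividing the class number $h_K$, so the entire proof reduces to two checks: that the hypotheses of Proposition~\ref{critqua} are satisfied for every odd prime $p$, and that $h_{\QQ(i)}$ has no odd prime divisors.

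For the hypothesis check, Proposition~\ref{critqua} requires either $p\geq 5$, or $p=3$ with $3$ unramified in $\QQ(i)/\QQ$. The field $\QQ(i)$ has discriminant $-4$, so the only ramified rational prime in $\QQ(i)/\QQ$ is $2$. In particular, $3$ is unramified, and the hypothesis holds for every odd prime.

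For the class number, I would invoke the classical fact that $\ZZ[i]$ is a principal ideal domain (e.g.\ because $\ZZ[i]$ is Euclidean for the complex-norm function), so that $h_{\QQ(i)}=1$. Since $1$ has no prime divisors, the condition $p\nmid h_{\QQ(i)}$ is trivially satisfied for every prime $p$. Combining this with the hypothesis check, part (1) of Proposition~\ref{critqua} gives the $p$-rationality of $\QQ(i)$ for every odd prime $p$.

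There is no real obstacle here: the statement is essentially a recording of the fact that $\ZZ[i]$ is a PID, repackaged through the general sufficient criterion for imaginary quadratic fields. If one wished to avoid invoking $h_{\QQ(i)}=1$ as a black box, one could instead verify directly that the $p$-Hilbert class field of $\QQ(i)$ is trivial (hence is vacuously contained in the anti-cyclotomic $\ZZ_p$-extension, as part (1) of Proposition~\ref{critqua} requires), but this is just the same fact in disguise.
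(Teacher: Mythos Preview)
Your proof is correct and follows exactly the approach the paper intends: the corollary is stated without proof as an immediate consequence of the preceding corollary (primes of non-$p$-rationality divide $h_K$) together with $h_{\QQ(i)}=1$. Your explicit verification that $3$ is unramified in $\QQ(i)/\QQ$ (so that Proposition~\ref{critqua} applies at $p=3$) is a detail the paper leaves implicit.
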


In general, to prove $p \nmid h_K$ for a given imaginary quadratic field, we use :
\begin{lemma}[\cite{LOU05} \S 1, Prop.2] \label{lemmlou}
If L is an imaginary quadratic field and $-d$ is its discriminant (with $d > 0$), we have
\[ h_L \leq \frac{w_L \cdot \sqrt{d_L}}{4\pi} (\log(d_L) + 1+ \gamma - \log(\pi)) \]
with
\[ w_L = \begin{cases}
6 & \text{if } d = 3 \\
4 & \text{if } d = 4 \\
2 & \text{if } d \geq 5
\end{cases} \]
and $\gamma$ the Euler constant. Moreover, we have $1+ \gamma - \log(\pi) \leq \frac{1}{2}$.
\end{lemma}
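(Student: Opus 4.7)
The plan is to derive the bound from the analytic class number formula together with an explicit upper estimate for $L(1, \chi_L)$. If $\chi_L$ denotes the primitive odd quadratic Dirichlet character of conductor $d_L$ attached to $L$ via the Kronecker symbol, the class number formula reads
\[ h_L = \frac{w_L \sqrt{d_L}}{2\pi}\, L(1, \chi_L), \]
so the lemma is equivalent to the purely analytic inequality
\[ L(1, \chi_L) \leq \tfrac{1}{2}\bigl(\log(d_L) + 1 + \gamma - \log(\pi)\bigr). \]

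To prove this $L$-value bound, I would use the Mellin--theta integral representation of $L(s, \chi)$. The completed $L$-function
\[ \Lambda(s, \chi_L) = (d_L/\pi)^{(s+1)/2}\, \Gamma\!\bigl((s+1)/2\bigr)\, L(s, \chi_L) \]
satisfies the functional equation $\Lambda(s, \chi_L) = W(\chi_L)\, \Lambda(1-s, \chi_L)$ with root number $W(\chi_L) = 1$, since $\chi_L$ is real and odd. Writing the Mellin integral of the theta series
\[ \theta(t) = \sum_{n \geq 1} n\, \chi_L(n)\, e^{-\pi n^2 t / d_L}, \]
splitting it at $t = 1$, and folding the piece on $(0,1)$ back onto $[1, \infty)$ via the functional equation yields a rapidly convergent representation of $L(1, \chi_L)$ as an integral over $[1, \infty)$. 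Majorizing $|\theta(t)|$ by $\sum_{n \geq 1} n\, e^{-\pi n^2 t / d_L}$ and computing the resulting Gaussian-type integrals in closed form produces the claimed inequality; the constant $\tfrac{1}{2}(1 + \gamma - \log \pi)$ emerges from the explicit value of the gamma factor at $s = 1$ together with the comparison of the theta-series integral with $\int_1^\infty dt / t$. The auxiliary inequality $1 + \gamma - \log(\pi) \leq 1/2$ is then purely numerical: plugging in $\gamma < 0.5773$ and $\log \pi > 1.1447$ gives $1 + \gamma - \log \pi < 0.4326 < 1/2$.

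The main obstacle is pinning down the constants precisely. A crude elementary approach --- writing $L(1, \chi_L) = \sum_{n \leq N} \chi_L(n)/n + \text{tail}$, bounding the partial sum by $\log N + \gamma + O(1/N)$ and the tail via Abel summation together with P\'olya--Vinogradov, then optimizing $N \asymp \sqrt{d_L \log d_L}$ --- already delivers the correct leading order $\tfrac{1}{2} \log d_L$, but loses on the lower-order constant. Achieving the clean statement as cited from \cite{LOU05} essentially requires the theta-integral machinery above, which exploits the exact functional equation rather than merely the Pólya--Vinogradov bound on character sums.
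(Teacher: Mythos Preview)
The paper does not give its own proof of this lemma: it is stated with a direct citation to \cite{LOU05}, \S1, Proposition~2, and used as a black box. So there is no argument in the paper to compare yours against.

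That said, your sketch is essentially the strategy Louboutin himself uses in \cite{LOU05}: reduce via the analytic class number formula to an upper bound on $L(1,\chi_L)$, and obtain that bound from the theta-integral (equivalently, the rapidly convergent ``incomplete gamma'' or ``smoothed'' representation of $L(s,\chi)$ coming from the functional equation). Your identification of the two ingredients --- the exact formula $h_L = \tfrac{w_L\sqrt{d_L}}{2\pi} L(1,\chi_L)$ and the target inequality $L(1,\chi_L)\le \tfrac12(\log d_L + 1 + \gamma - \log\pi)$ --- is correct, and the numerical verification of $1+\gamma-\log\pi < 1/2$ is fine.

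Where your write-up falls short of an actual proof is the step ``majorizing $|\theta(t)|$ \ldots\ and computing the resulting Gaussian-type integrals in closed form produces the claimed inequality''. This is precisely where all the work lies: one must track the constants through the folded Mellin integral and show that the resulting expression is bounded by $\tfrac12\log d_L$ plus exactly $\tfrac12(1+\gamma-\log\pi)$, not merely $O(1)$. As you yourself note, getting the sharp constant is delicate, and your paragraph does not carry it out. So as a proof this is a correct outline with the key computation omitted; as a comparison with the paper, the paper simply imports the result and your outline matches the method of the cited source.
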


\begin{remark} 
We can also show that, for every odd prime $p$, there even exists \emph{infinitely many} $p$-rationnal imaginary quadratic fields. To prove it, we can use a result by Hartung (\cite{HAR74}) proving there exists infinitely many imaginary quadratic fields $K$ such that $p$ does not divide the class number $h_K$. 

The reader may notice Hartung proves this theorem only in the case $p=3$. But, as stated in the article itself, the method can be easily adapted to prove the theorem for every odd prime $p$.
\end{remark}

\subsubsection{Real Case}

The real case is much more complicated : units of these real  quadratic fields play their part in the complexity of the setting.

\begin{example}
Some examples of real quadratic $p$-rational fields may even be found in the litterature prior to Greenberg's statement, for example in \cite{BYE01}, where the author does not use the terminology of $p$-rationality. However, for every prime $p \geq 5$, \cite{BYE01} Prop. 3.1 exhibits a real quadratic number field satisfying conditions which are sufficient to assert the $p$-rationality. The author even proves that there exists infinitely many real quadratic fields verifying these conditions, and such fields are $p$-rational. Thus, for every prime $p \geq 5$, there exists infinitely many $p$-rational real quadratic fields.
\end{example}

Many examples of $p$-rational real quadratic fields may be found in the litterature. In particular, we will use the following real quadratic fields :

\begin{proposition}[\cite{BM20} Prop. 4.4] \label{propmova}
For all prime $p \geq 5$, the real quadratic fields $\QQ(\sqrt{p(p-2)})$, $\QQ(\sqrt{p(p+2)})$ and $\QQ(\sqrt{(p-2)(p+2)})$ are $p$-rational.
\end{proposition}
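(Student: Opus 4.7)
The three fields $K_1 := \QQ(\sqrt{p(p-2)})$, $K_2 := \QQ(\sqrt{p(p+2)})$, $K_3 := \QQ(\sqrt{(p-2)(p+2)})$ each possess an explicit small unit of norm $1$, thanks to the identities $p(p-2) = (p-1)^2 - 1$, $p(p+2) = (p+1)^2 - 1$, and $(p-2)(p+2) = p^2 - 4$: namely
\[
\varepsilon_1 := (p-1) + \sqrt{p(p-2)}, \quad \varepsilon_2 := (p+1) + \sqrt{p(p+2)}, \quad \varepsilon_3 := \tfrac{1}{2}(p + \sqrt{p^2-4}).
\]
By the classical theory of Richaud--Degert fields, each $\varepsilon_i$ is (up to possibly a small power in degenerate cases where $K_i$ collapses to a quadratic field of smaller discriminant) the fundamental unit of $K_i$. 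The plan is to apply Proposition~\ref{critqua}(2): for each $i \in \{1,2,3\}$ one must verify that $(a)$ $p \nmid h_{K_i}$, and $(b)$ the fundamental unit $\varepsilon_0$ of $K_i$ is not a $p$-th power in the completion $(K_i)_v$ at some place $v \mid p$. Since $\varepsilon_i$ is a power of $\varepsilon_0$, it suffices for $(b)$ to establish the analogous statement for $\varepsilon_i$ in place of $\varepsilon_0$.

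Condition $(b)$ is a concrete local computation. In the cases $i = 1, 2$ the prime $p$ ramifies in $K_i/\QQ$: at the unique place $v \mid p$ one finds $\varepsilon_i \equiv -1 \pmod{\mathfrak{p}_v}$ with $v_{\mathfrak{p}_v}(\varepsilon_i + 1) = 1$, so that $-\varepsilon_i$ lies in $U_v^{(1)} \setminus U_v^{(2)}$. Since the absolute ramification index $e_v = 2$ satisfies $e_v/(p-1) < 1$ for $p \geq 5$, the expansion of $(1+x)^p$ with $x \in \mathfrak{p}_v$ shows $(U_v^{(1)})^p \subset U_v^{(1 + e_v)} = U_v^{(3)}$, whence $\varepsilon_i \notin (K_{i,v}^*)^p$ (using $(-1)^p = -1$). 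The case $i = 3$ is treated similarly: $p$ is unramified in $K_3$, and the quadratic relation $\varepsilon_3^2 - p\,\varepsilon_3 + 1 = 0$ controls the congruence of $\varepsilon_3$ modulo $p$ at each place above $p$, again producing a contradiction under the hypothetical assumption that $\varepsilon_3$ be locally a $p$-th power.

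The main obstacle is $(a)$. The natural strategy combines the smallness of the regulator with the Dirichlet class number formula
\[
h_{K_i}\, R_{K_i} = \tfrac{1}{2}\sqrt{d_{K_i}}\; L(1, \chi_{K_i})
\]
and a Louboutin-type upper bound $L(1, \chi_{K_i}) \leq \tfrac{1}{2}(\log d_{K_i} + C)$. Richaud--Degert theory gives $R_{K_i} \asymp \tfrac{1}{2}\log d_{K_i}$ (since $\varepsilon_i$ has size comparable to $\sqrt{d_{K_i}}$), so the resulting bound $h_{K_i} = O(\sqrt{d_{K_i}}) = O(p)$ is delicate: it is of the same order as $p$ and does not, on its own, exclude $p \mid h_{K_i}$. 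To close this final gap I would either sharpen the analytic estimate using the explicit factorization of $d_{K_i}$, or bring in additional algebraic input -- for instance an analysis of the action of $\mathrm{Gal}(K_i/\QQ)$ on the $p$-part of $\mathrm{Cl}(K_i)$, possibly combined with Chevalley's ambiguous class number formula and the principal ideal factorizations provided by the explicit unit $\varepsilon_i$. The finitely many small exceptional primes $p$ not covered by such asymptotic arguments may be checked directly with the \textsc{Pari/Gp} algorithms of~\cite{GRA19}.
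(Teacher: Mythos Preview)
The paper does not itself prove this proposition; it is quoted from \cite{BM20} Prop.~4.4, with only the remark that the proof works because the fundamental units are explicitly known. So there is no in-paper argument to compare against beyond that pointer.

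Your identification of the Richaud--Degert units $\varepsilon_i$ is correct, and your local non-$p$-th-power verification for condition~(b) is essentially fine. The gap --- which you yourself flag --- is condition~(a): you do not establish $p \nmid h_{K_i}$, you only sketch two possible strategies. As written this is a plan, not a proof.

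Your pessimism about the analytic route is also slightly misplaced. For $K_1$, say, one has the exact relation $\sqrt{d_{K_1}} \le 2\sqrt{(p-1)^2-1} = \varepsilon_1 - \varepsilon_1^{-1} < \varepsilon_1$, hence $\tfrac12\log d_{K_1} < R_{K_1}$; combined with an explicit bound $L(1,\chi) \le \tfrac12\log d + c$, the class-number formula then gives
\[
h_{K_1} \;<\; \tfrac12\sqrt{d_{K_1}}\,\bigl(1 + c/R_{K_1}\bigr) \;<\; (p-1)\bigl(1 + c/\log(p-1)\bigr) \;<\; p
\]
for all $p$ beyond an effective threshold, after which a finite computation suffices (and similarly for $K_2,K_3$, where in fact $\sqrt{d_{K_3}} = \varepsilon_3 - \varepsilon_3^{-1}$ gives an even better margin). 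So the strategy you dismissed can be pushed through, but only with explicit constants and an explicit cutoff --- the bare ``$h_{K_i}=O(p)$'' you wrote is not enough, and you stopped short of the actual computation.

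The argument in \cite{BM20} avoids this bookkeeping altogether. As the paper notes after Proposition~\ref{critqua}, \cite{BM20} Corollary~2.6 supplies \emph{alternative} $p$-rationality criteria for real quadratic fields, adapted to the ramification type of $p$; for the fields $K_i$ these criteria reduce the question directly to explicit congruences on the $\varepsilon_i$, without a separate class-number hypothesis. Your local computations are essentially the right input for that criterion; by routing instead through Proposition~\ref{critqua}(2) you imposed on yourself a class-number condition that \cite{BM20} does not need and that you did not verify.
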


\begin{remark} Notice that the proof is made possible because, for all these fields, we have a simple explicit formula for their fundamental unit. We will use the $p$-rationality of these fields to prove our main theorem. \end{remark}

\subsection{Biquadratic $p$-Rational Number Fields}

Using known $p$-rational quadratic number fields, it is possible to construct biquadratic $p$-rational number fields (under certain conditions).

In 2020, Benmerieme and Movahhedi proved in \cite{BM20} the following :

\begin{proposition}[\cite{BM20} Prop. 4.4] 
For all prime $p \geq 5$, the real biquadratic field \[ \QQ(\sqrt{p(p-2)}, \sqrt{p(p+2)}) \] is $p$-rational.
\end{proposition}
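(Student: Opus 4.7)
The plan is to combine Proposition~\ref{critsub} with Proposition~\ref{propmova}. Since $K := \QQ(\sqrt{p(p-2)}, \sqrt{p(p+2)})$ is abelian over $\QQ$ and $[K:\QQ]$ is a power of $2$, hence coprime to any odd prime $p \geq 5$, the criterion of Proposition~\ref{critsub} reduces the problem to showing that every cyclic subextension of $\QQ$ contained in $K$ is $p$-rational.

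First I would verify that $K$ is genuinely biquadratic, i.e.\ that $\mathrm{Gal}(K/\QQ) \simeq (\ZZ/2\ZZ)^2$. This amounts to checking that none of $p(p-2)$, $p(p+2)$, and their product $p^2(p-2)(p+2)$ is a square in $\QQ^\times$; equivalently, that $(p-2)(p+2) = p^2 - 4$ is not a square, which for $p \geq 5$ follows from the strict inequalities $(p-1)^2 < p^2 - 4 < p^2$. The nontrivial cyclic subextensions of $K/\QQ$ are then exactly the three quadratic subfields
\[ \QQ(\sqrt{p(p-2)}),\qquad \QQ(\sqrt{p(p+2)}),\qquad \QQ(\sqrt{(p-2)(p+2)}), \]
the last identification coming from the fact that $\sqrt{p(p-2)}\cdot\sqrt{p(p+2)} = p\sqrt{(p-2)(p+2)}$, so that the third quadratic subfield corresponds to the square class of $p^2(p-2)(p+2) = (p-2)(p+2)$ modulo $(\QQ^\times)^2$.

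At this point the proof is essentially finished: the three quadratic subfields listed are precisely the three fields already shown to be $p$-rational in Proposition~\ref{propmova} for every prime $p \geq 5$. Invoking Proposition~\ref{critsub} concludes that $K$ itself is $p$-rational. The proof presents no real obstacle; its content lies entirely in the (non-obvious) choice of generators of $K$, which has been arranged so that the three quadratic subfields coincide with those whose explicit fundamental units allow one to establish $p$-rationality in Proposition~\ref{propmova}.
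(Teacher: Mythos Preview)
Your proof is correct and follows exactly the approach indicated in the paper (and in \cite{BM20}): reduce via Proposition~\ref{critsub} to the three quadratic subfields, which are precisely those handled by Proposition~\ref{propmova}. One minor quibble: your ``equivalently'' is slightly imprecise, since you also need $p(p\pm 2)$ themselves to be non-squares, but this is immediate from $p$ being an odd prime coprime to $p\pm 2$.
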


\begin{remark} In \cite{BM20}, the authors prove this proposition by considering every quadratic subfield of $\QQ(\sqrt{p(p-2)}, \sqrt{p(p+2)})$ : specifically, they prove $\QQ(\sqrt{p(p+2)})$, $\QQ(\sqrt{p(p-2)})$ and $\QQ(\sqrt{(p+2)(p-2)})$ are $p$-rational for all prime $p \geq 5$ (see Proposition \ref{propmova}).
\end{remark}

\begin{remark} In a similar way, we could exhibit other $p$-rational biquadratic fields, for $p \geq 5$, such as $\QQ(\sqrt{-p},\sqrt{-(p+2)})$ (\cite{BM20} Prop. 4.2), or $\QQ(\sqrt{-(p-1)}, \sqrt{-(p+1)})$ for $p \equiv 3 \pmod{4}$ (using the $p$-rationality of $\QQ(\sqrt{p^2 -1})$ as proved in this case by Byeon in \cite{BYE01}).
\end{remark}

\section{Triquadratic Rational Number Fields}

We state our main result :
\begin{theorem}
There exists infinitely many primes $p$ such that \[ \QQ(\sqrt{p(p+2)}, \sqrt{p(p-2)},i) \] is $p$-rational.
\end{theorem}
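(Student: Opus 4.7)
The plan is to exploit the multiquadratic structure via Proposition~\ref{critsub}. For $p$ odd, $[K:\QQ] = 8$ is coprime to $p$, so $K = \QQ(\sqrt{p(p+2)}, \sqrt{p(p-2)}, i)$ is $p$-rational if and only if each of its seven quadratic subfields is $p$-rational. Three of them, the real fields $\QQ(\sqrt{p(p+2)})$, $\QQ(\sqrt{p(p-2)})$ and $\QQ(\sqrt{(p+2)(p-2)})$, are $p$-rational for all $p \geq 5$ by Proposition~\ref{propmova}. A fourth, $\QQ(i)$, is $p$-rational for every odd prime. Hence only the three imaginary quadratic subfields $L_1 = \QQ(\sqrt{-p(p+2)})$, $L_2 = \QQ(\sqrt{-p(p-2)})$ and $L_3 = \QQ(\sqrt{-(p+2)(p-2)})$ require attention, and for each of them, by Proposition~\ref{critqua}(1), it suffices to establish $p \nmid h_{L_i}$.

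Write $p+2 = u^2 m$ and $p-2 = v^2 n$ with $m, n$ square-free; since $\gcd(p+2,p-2) = 1$ for $p$ odd, the pairs $(u,v)$ and $(m,n)$ are coprime. Then $L_1 = \QQ(\sqrt{-pm})$, $L_2 = \QQ(\sqrt{-pn})$ and $L_3 = \QQ(\sqrt{-mn})$, with absolute discriminants bounded respectively by $4pm \leq 5p^2/u^2$, $4pn \leq 5p^2/v^2$ and $4mn \leq 4p^2/(uv)^2$. Applying Lemma~\ref{lemmlou} (noting $w_{L_i} = 2$ for $p$ large enough) gives explicit bounds of the shape $h_{L_1} \ll p \log(p)/u$, $h_{L_2} \ll p \log(p)/v$ and $h_{L_3} \ll p \log(p)/(uv)$. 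In particular, as soon as $u$ and $v$ both exceed a fixed constant multiple of $\log p$, all three class numbers are strictly less than $p$, forcing $p \nmid h_{L_i}$ and hence the $p$-rationality of $L_1$, $L_2$ and $L_3$.

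It therefore remains to show that there exist infinitely many primes $p$ for which $p+2$ and $p-2$ each carry a square factor of size at least $C (\log p)^2$, for some absolute constant $C$. This is the content of the analytic proposition proved independently in Section~\S 4, and is the true heart of the argument: every algebraic reduction above collapses onto a purely analytic distribution question about primes whose immediate neighbors have large square divisors. The main obstacle is precisely this analytic statement. Granted it, the theorem follows by assembling the $p$-rationality of the seven quadratic subfields as above and invoking Proposition~\ref{critsub} one final time.
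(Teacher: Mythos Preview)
Your proof is correct and follows essentially the same route as the paper: reduce via Proposition~\ref{critsub} to the seven quadratic subfields, dispose of the four easy ones, and for the three imaginary ones bound the discriminant by exploiting large square factors in $p\pm 2$, then apply Lemma~\ref{lemmlou} to force $h_{L_i}<p$. The only cosmetic difference is that the paper applies Proposition~\ref{propan} with $A=2$ to obtain $h_{L_i}\ll p/\log p$ directly, whereas your phrasing (``a fixed constant multiple of $\log p$'') implicitly requires invoking Proposition~\ref{propan} with some $A>1$; since the proposition holds for every $A>0$, this is harmless.
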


\begin{proof}
To prove that $K = \QQ(\sqrt{p(p+2)}, \sqrt{p(p-2)},i)$ is $p$-rational for infinitely many primes $p$, we will prove there exists infinitely many primes $p$ such that the following quadratic subfields are all $p$-rationnal :
\begin{multicols}{2}
\begin{enumerate}
\item $K_1 = \QQ(\sqrt{p(p+2)})$
\item $K_2 = \QQ(\sqrt{p(p-2)})$
\item $K_3 = \QQ(\sqrt{(p+2)(p-2)})$
\item $K_4 = \QQ(i)$
\item $K_5 = \QQ(i\sqrt{p(p+2)})$
\item $K_6 = \QQ(i\sqrt{p(p-2)})$
\item $K_7 = \QQ(i\sqrt{(p+2)(p-2)})$.
\end{enumerate}
\end{multicols}

Indeed, we already know $K_4$ is $p$-rationnal for all odd prime $p$. We also know $K_1$, $K_2$ et $K_3$ are $p$-rational for all prime $p \geq 5$ , according to Proposition \ref{propmova}.

Then, we use the analytic proposition proved apart in the next section (Proposition~\ref{propan})~: for $A > 0$, there exists infinitely many primes $p$ such that $p-2$ has a square factor larger than $(\log p)^A$ and $p+2$ has a square factor larger than $(\log p)^A$ as well. In particular, we can set $A = 2$, and we get, for all such $p$~:
\begin{align*}
\mathrm{disc}(K_5) & \leq \frac{4p(p+2)}{(\log p)^{4}} \\
\mathrm{disc}(K_6) & \leq \frac{4p(p-2)}{(\log p)^{4}} \\
\mathrm{disc}(K_7) & \leq \frac{4(p+2)(p-2)}{(\log p)^{8}}
\end{align*}

From now on, we'll only consider primes $p \geq 5$ lying in the infinite set of primes described previously, which we will denote by $\mathcal{P}$. 

Using Lemma~\ref{lemmlou}, for every $p \in \mathcal{P}$, we get :

\allowdisplaybreaks
\begin{align*}
h(K_5) & \leq \frac{6}{4\pi} \sqrt{ \frac{4p(p+2)}{(\log p)^4} } \left( \log \left( \frac{4p(p+2)}{(\log p)^4} \right) + \frac{1}{2} \right) \ll  \frac{p}{\log(p)} \\
h(K_6) & \leq \frac{6}{4\pi} \sqrt{ \frac{4p(p-2)}{(\log p)^4} } \left( \log \left( \frac{4p(p-2)}{(\log p)^4} \right) + \frac{1}{2} \right) \ll \frac{p}{\log(p)} \\
h(K_7) & \leq \frac{6}{4\pi} \sqrt{ \frac{4(p+2)(p+2)}{(\log p)^8} } \left( \log \left( \frac{4(p+2)(p+2)}{(\log p)^8} \right) + \frac{1}{2} \right) \ll \frac{p}{(\log p)^3}
\end{align*}

with absolute and effective implied constants.

Then, as $p \in \mathcal{P}$ tends to infinity, $p$ gets larger than $h(K_5)$, $h(K_6)$ and $h(K_7)$.

Thus, if $p$ is a big enough prime in $\mathcal{P}$, then $p \nmid h(K_5)$, $p \nmid h(K_6)$ and $p \nmid h(K_7)$, and these fields are $p$-rational.

In conclusion, there exists infinitely many primes $p$ such that all quadratic subfields of $K = \QQ(\sqrt{p(p+2)},\sqrt{p(p-2)},i)$ are $p$-rational, so $K$ itself is $p$-rational for infinitely many  primes $p$.

\end{proof}

\section{Proof of the Analytic Proposition}

Now, we are going to prove the analytic proposition we previously used to prove our main theorem :

\begin{proposition} \label{propan} Let $A > 0$. 

There exists infinitely many primes $p$ such that : there exists $m,n \in \mathbb{N}$ with
\[ \begin{cases} n^2 \mid p-2 \\
m^2 \mid p+2 \\
(\log p)^A < n \\
(\log p)^A < m 
\end{cases} \]
In other words, there exists infinitely many primes $p$ such that : $p-2$ has a square factor larger than $(\log p)^A$, and $p+2$ has a square factor larger than $(\log p)^A$.
\end{proposition}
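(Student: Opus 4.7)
The plan is to fix a large parameter $X$, select moduli $n, m$ of polylogarithmic size in $X$, and then invoke the Chinese remainder theorem together with Siegel--Walfisz to produce primes $p \leq X$ in the desired joint arithmetic progression. The asymmetry in the two conditions (one modulo $n^2$, one modulo $m^2$) causes no trouble, because we can pick $n, m$ to be distinct small primes.

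\textbf{Step 1: choice of $n$ and $m$.} Set $L := \lfloor (\log X)^A \rfloor$. By Bertrand's postulate (or the prime number theorem), as soon as $X$ is large enough there exist two distinct odd primes $q_1, q_2$ with $L < q_1 < q_2 \leq 4L$. I aim to produce primes $p \leq X$ satisfying
\[ p \equiv 2 \pmod{q_1^2}, \qquad p \equiv -2 \pmod{q_2^2}. \]
Any such $p$ then has $n = q_1$ and $m = q_2$ both strictly greater than $(\log X)^A \geq (\log p)^A$, with $n^2 \mid p-2$ and $m^2 \mid p+2$, which is exactly the conclusion sought.

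\textbf{Step 2: CRT and Siegel--Walfisz.} Since $q_1, q_2$ are distinct odd primes, $\gcd(q_1 q_2, 2) = 1$ and the two congruences combine, by the Chinese remainder theorem, into a single congruence $p \equiv a \pmod{N}$ with $N := q_1^2 q_2^2$ and $\gcd(a, N) = 1$. Now $N \leq 256\, (\log X)^{4A}$ is a power of $\log X$, so it lies well within the Siegel--Walfisz range. For any fixed $B > 4A$, Siegel--Walfisz gives
\[ \pi(X; N, a) = \frac{\mathrm{Li}(X)}{\phi(N)} \bigl(1 + o(1)\bigr) \gg \frac{X}{N \log X} \gg \frac{X}{(\log X)^{4A+1}}, \]
which tends to infinity with $X$. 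Thus for every sufficiently large $X$ there exist primes $p \leq X$ with the required property. Letting $X$ range over a sequence $X_k \to \infty$, and noting that each $X_k$ eventually produces primes greater than $X_{k-1}$ (since the count $\gg X_k / (\log X_k)^{4A+1}$ grows), one obtains infinitely many such primes.

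\textbf{Anticipated obstacle.} The only delicate point is the size range: the moduli $n, m$ must be simultaneously large enough to beat $(\log p)^A$ and small enough for Siegel--Walfisz to deliver an asymptotic in the progression mod $n^2 m^2$. The choice $n, m \asymp (\log X)^A$ works precisely because the resulting modulus $N$ is polylogarithmic in $X$, where Siegel--Walfisz applies unconditionally (with an ineffective constant, which is harmless for an infinitude statement). Pushing $n, m$ to be a power of $p$ would require substantially stronger input such as Bombieri--Vinogradov on average or GRH, and is not needed here since $(\log p)^A$ is what the application in Section~3 demands.
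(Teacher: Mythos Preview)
Your argument is correct. Both your approach and the paper's rest on the same analytic input, the Siegel--Walfisz theorem applied to a modulus of size $(\log X)^{O(1)}$, but the routes differ substantially. The paper sets up a double-weighted sum $\sum_{p<x}\log p\bigl(\sum_m 1\bigr)\bigl(\sum_n 1\bigr)$ over \emph{all} admissible pairs $(m,n)$ with $(\log x)^A<m,n<(\log x)^B$, reduces to coprime odd pairs, applies Siegel--Walfisz term by term, and then carries out a fairly lengthy estimation of the main term $\sum x/\varphi(m^2n^2)$ against the accumulated error. You instead fix a \emph{single} pair of distinct odd primes $q_1,q_2\in\bigl((\log X)^A,4(\log X)^A\bigr]$ via Bertrand, combine the two congruences by CRT into one progression modulo $q_1^2q_2^2\ll(\log X)^{4A}$, and read off $\pi(X;N,a)\gg X/(\log X)^{4A+1}\to\infty$ directly.

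Your route is shorter and more elementary: no averaging, no series comparisons, no coprimality sieve. The paper's averaging does buy something in principle---it gives a lower bound of order $x/(\log x)^{2A}$ for the weighted count rather than $x/(\log x)^{4A+1}$, and the method generalises more transparently to the GRH variant in the appendix---but for the bare infinitude statement of Proposition~\ref{propan} none of that is needed. One minor point worth making explicit in your write-up: since $q_1>L=\lfloor(\log X)^A\rfloor$ and $q_1$ is an integer, $q_1\ge L+1>(\log X)^A\ge(\log p)^A$ for every $p\le X$, so the inequality $(\log p)^A<n$ genuinely holds. Your final infinitude step is also fine: the count of good primes up to $X_k$ eventually exceeds $\pi(X_{k-1})$, forcing new primes above $X_{k-1}$.
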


Notations : for positive integers $m,n$, we set
\begin{align*} 
G(m) & : = \left\{ p \text{ prime} \left| \begin{array}{l}  
p \equiv -2 \pmod{m^2} \\ 
(\log p)^A < m \\ \end{array} \right. \right\} \\ 
H(n) & := \left\{ p \text{ prime} \left| \begin{array}{l}  
p \equiv 2 \pmod{n^2} \\ 
(\log p)^A < n \\ \end{array} \right. \right\}
\end{align*}
and
\[ I(m,n) := G(m) \cap H(n) = \left\{ p \text{ prime} \left| \begin{array}{l} p \equiv - 2 \pmod{m^2} \\ 
p \equiv 2 \pmod{n^2} \\ 
(\log p)^A < m\\
(\log p)^A < n \\ \end{array} \right. \right\} \] 

In order to prove the previous proposition, it is sufficient to show that the sum
\[ \sum_{\substack{p \text{ prime} \\ 3 \leq p < x}} \log(p) \left( \sum_{\substack{m \in \mathbb{N} \\  m < \sqrt{x+2} \\ p \in G(m) }} 1 \right) \times \left( \sum_{\substack{n \in \mathbb{N} \\ n < \sqrt{x-2} \\ p \in H(n) }} 1 \right)
\]
tends to infinity as $x \rightarrow + \infty$.

\subsection{First lower bound}

\allowdisplaybreaks

First, we have
\begin{align}
&  \sum_{\substack{p \text{ prime} \\ 3 \leq p < x}} \log(p) \left( \sum_{\substack{m \in \mathbb{N} \\ m < \sqrt{x+2} \\ p \in G(m) }} 1 \right) \times \left( \sum_{\substack{n \in \mathbb{N} \\ n < \sqrt{x-2} \\ p \in H(n) }} 1 \right) \label{somme1}  \\ 
& = \sum_{\substack{p \text{ prime} \\ 3 \leq p < x}} \sum_{\substack{m,n \in \mathbb{N} \\ m < \sqrt{x+2} \\ n < \sqrt{x-2} \\ p \in I(m,n)}} \log(p) \\
& = \sum_{\substack{m,n \in \mathbb{N} \\ m < \sqrt{x+2} \\ n < \sqrt{x-2} }} \sum_{\substack{p \text{ prime} \\ 3 \leq p < x \\ p\in I(m,n)}} (\log p) \\
& \geq \sum_{\substack{m,n \in \mathbb{N} \\ m < \sqrt{x+2} \\ n < \sqrt{x-2} \\ (m,n) = 1 \\ m,n \text{ odd}}} \sum_{\substack{p \text{ prime} \\ 3 \leq p < x \\ p \in I(m,n)}} \log(p) \label{somme2}
\end{align}

We obtain a first lower bound by restricting the first sum (on $m,n \in \mathbb{N}$) with the conditions $(m,n)=1$ and $m,n$ odd. We add the first condition in order to use the Chinese remainder theorem : as $n$ and $m$ are coprime, so are $m^2$ and $n^2$, and there exists $a_{m,n} \in \mathbb{Z}$ such that, for all $k \in \mathbb{Z}$, we have :
\[ \begin{cases} k \equiv 2 & [n^2] \\ k \equiv -2 & [m^2] \end{cases} \quad \iff \quad k \equiv a_{m,n} [m^2 n^2] \]
Then, if $m$ and $n$ are coprime, we get
\[ I(m,n) = \left\{ p \text{ prime} \left| \begin{array}{l} p \equiv a_{m,n} \pmod{m^2 n^2} \\ (\log p)^A < m \\ (\log p)^A < n \end{array} \right. \right\} \]
Moreover, the condition ``$m,n$ odd'' ensures that $a_{m,n}$ and $m^2 n^2$ are coprime.

~\\
Let $B>0$ such that $A <B$. For $x$ big enough (in particular, such that $\sqrt{x-2}$ is larger than $(\log x)^B$), we may bound~\eqref{somme2} from below by
\begin{equation} \sum_{\substack{ (\log x)^A < m,n < (\log x)^B \\ (m,n)=1 \\ m,n \text{ odd}}} \sum_{\substack{ 3 \leq p < x \\ p \in I(m,n) }} \log(p)  \label{somme3}
\end{equation}

We notice that :
\[ \begin{cases} p < x \\ (\log p)^A < n \\ (\log p)^A < m \end{cases} \iff \begin{cases} (\log p)^A < (\log x)^A \\ (\log p)^A < n \\ (\log p)^A < m \end{cases}  \iff (\log p)^A < \min\left( (\log x)^A, n, m \right) \]
Then, if $(\log x)^A < m,n$ (as in~\eqref{somme3}), we get $\min((\log x)^A,n,m) =  (\log(x))^A$, and the previous conditions are equivalent to $(\log p)^A < (\log x)^A$, i.e. $p < x$.

Thus~\eqref{somme3} is equal to
\begin{equation} \label{somme4}
\sum_{\substack{ (\log x)^A < m,n < (\log x)^B \\ (m,n)=1\\ m,n \text{ odd}}} \sum_{\substack{ 3 \leq p < x \\ p \equiv a_{m,n} [m^2 n^2] }} \log(p)
\end{equation}
which is a convenient lower bound of our first sum~\eqref{somme1}.

\subsection{Siegel-Walfisz's Theorem}

\begin{definition}
For $(a,q)=1$, we define

\[ \theta(x;q,a) = \sum_{\substack{ p \text{ prime} \\ p < x \\ p \equiv a [q] }} \log(p) \qquad \text{et} \qquad \psi(x;a,q) = \sum_{\substack{ n < x \\ n \equiv a [q] }} \Lambda(n) \]

with $\Lambda(n) = \begin{cases} \log(p) & \text{if } n = p^k \text{ with } p \text{ prime} \\
0 & \text{else} \end{cases}$
\end{definition}

The prime number theorem gives an equivalent at infinity for $\theta$ and $\psi$. An estimation of the error term is given by the following theorem :

\begin{theorem}[\textsc{Siegel-Walfisz}, \cite{IWAKOW} Corollary 5.29] Let $a,q \in \mathbb{N}$, $q \geq 1$, $(a,q) = 1$. Let $C > 0$. For $q \ll \log(x)^C$, the following holds
\[ \psi(x;q,a) = \frac{x}{\varphi(q)} + O\left( \frac{x}{(\log x)^C} \right) \]
for every $x \geq 2$. 
The implied constant depends only on $C$.
\end{theorem}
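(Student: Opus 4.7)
The plan is to follow the classical analytic route: decompose $\psi(x;q,a)$ into a sum over Dirichlet characters modulo $q$, reduce each non-principal contribution to a sum over non-trivial zeros of $L(s,\chi)$ via an explicit formula, and then invoke a classical zero-free region together with Siegel's theorem to control the possible exceptional real zero. The first step is orthogonality: for $(a,q)=1$,
\[ \psi(x;q,a) = \frac{1}{\varphi(q)} \sum_{\chi \bmod q} \overline{\chi}(a)\, \psi(x,\chi), \qquad \psi(x,\chi) := \sum_{n \leq x} \chi(n) \Lambda(n). \]
The principal character contributes $\psi(x,\chi_0) = \psi(x) + O((\log q)(\log x))$, and the prime number theorem with classical error term produces the main term $x/\varphi(q)$. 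Everything then reduces to showing that for each non-principal $\chi$ mod $q$, $\psi(x,\chi) \ll_C x/(\log x)^{C+1}$ uniformly in $q \leq (\log x)^C$, with an implied constant depending only on $C$.

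The second step passes to the primitive character inducing $\chi$ (this changes $\psi(x,\chi)$ only by $O((\log qx)^2)$) and applies Perron's formula to $-L'(s,\chi)/L(s,\chi)$. Shifting the contour past the critical strip and truncating at height $T$ yields the standard explicit formula
\[ \psi(x,\chi) = -\sum_{\substack{\rho = \beta + i\gamma \\ L(\rho,\chi)=0,\ 0 < \beta < 1 \\ |\gamma| \leq T}} \frac{x^{\rho}}{\rho} + O\!\left( \frac{x (\log qxT)^{2}}{T} \right) \]
for any $T \geq 2$. This converts the problem into bounding $x^\rho/\rho$ summed over the non-trivial zeros.

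The third step is the zero-free region analysis. The classical Landau region gives an absolute $c_0 > 0$ with $L(s,\chi) \neq 0$ for $\sigma \geq 1 - c_0/\log(q(|t|+2))$, except possibly for one real simple zero $\beta_1 = \beta_1(\chi)$ of at most one real non-principal character modulo $q$. Combined with the zero-counting bound $N(T,\chi) \ll T\log(qT)$ and the choice $T = \exp(\sqrt{\log x})$, the non-exceptional zeros contribute $O(x \exp(-c_1 \sqrt{\log x}))$, which is $\ll_C x/(\log x)^{C}$ as required. For the potential Siegel zero one invokes Siegel's theorem: for every $\varepsilon > 0$ there exists an ineffective $c(\varepsilon) > 0$ with $\beta_1 \leq 1 - c(\varepsilon) q^{-\varepsilon}$. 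Taking $\varepsilon = 1/(2C)$ gives $q^{-\varepsilon} \geq (\log x)^{-1/2}$ in the range $q \leq (\log x)^C$, hence
\[ \frac{x^{\beta_1}}{\beta_1} \ll x \exp\!\left( -c(\varepsilon) (\log x)^{1/2} \right) \ll_{C} \frac{x}{(\log x)^{C}}. \]

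The main obstacle, and the source of the ineffectivity of the implied constant, is precisely Siegel's theorem. Its proof is a non-constructive dichotomy comparing two distinct real primitive characters, so no explicit admissible $c(\varepsilon)$ is known, and this ineffectivity propagates directly to the Siegel--Walfisz constant. The remaining technical work, namely uniform estimates on $L'/L$ along the shifted contour and on horizontal segments avoiding zeros, together with the passage between imprimitive and primitive characters, is standard and poses no serious difficulty.
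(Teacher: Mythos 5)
The paper does not prove this statement at all: it is the classical Siegel--Walfisz theorem, quoted verbatim from Iwaniec--Kowalski (\cite{IWAKOW}, Corollary 5.29) and used as a black box, so there is no internal proof to compare against. Your outline is the standard textbook argument (orthogonality of characters, passage to the primitive character, truncated explicit formula, the Landau zero-free region with zero-counting $N(T,\chi)\ll T\log qT$ and $T=\exp(\sqrt{\log x})$, and Siegel's theorem with $\varepsilon=1/(2C)$ to kill a possible exceptional zero in the range $q\leq(\log x)^C$), and the steps are correctly assembled: the parameter choices do give $x\exp(-c\sqrt{\log x})\ll_C x/(\log x)^C$ for the non-exceptional zeros and the truncation error, and $x^{\beta_1}\ll x\exp(-c(\varepsilon)(\log x)^{1/2})$ for the Siegel zero. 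Two minor remarks: after dividing by $\varphi(q)$ and summing over at most $\varphi(q)$ non-principal characters, a bound $\psi(x,\chi)\ll_C x/(\log x)^{C}$ per character already suffices (your $C+1$ is harmless but not needed); and your observation that Siegel's theorem makes the implied constant ineffective is correct and worth keeping in mind, since the paper elsewhere speaks of effective constants while its infinitude statement ultimately rests on this ineffective input. As a proof sketch rather than a complete proof, what remains (uniform $L'/L$ bounds on the shifted contour, the imprimitive-to-primitive comparison) is indeed standard, as you say.
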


Later in the proof, we will need this result using $\theta$ instead of $\psi$. 
First, we check that $\displaystyle \psi(x;q,a) = \theta(x;q,a) + O \left( x^\frac{1}{2} \log(x)^2 \right)$ (see \cite{HW} XXII. Theorem 413 p.452)

Then, the previous theorem may be adapted in the following way :

\begin{corollary} \label{coroSW} Let $a,q \in \mathbb{N}$, $q \geq 1$, $(a,q) = 1$. 

Let $C > 0$. For $q \ll \log(x)^C$, the following holds
\[ \theta(x;q,a) = \frac{x}{\varphi(q)} + O\left( \frac{x}{(\log x)^C} \right) \]
for every $x \geq 2$. The implied constant depends only on $C$.
\end{corollary}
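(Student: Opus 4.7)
The plan is to reduce the corollary to the stated Siegel--Walfisz theorem by controlling the discrepancy between $\theta(x;q,a)$ and $\psi(x;q,a)$. By the definitions, this discrepancy is exactly the sum of $\log p$ over prime powers $p^k$, $k\geq 2$, which are at most $x$ and congruent to $a$ modulo $q$. Dropping the congruence condition and bounding this sum in a crude way yields the classical estimate $O\bigl(x^{1/2}(\log x)^2\bigr)$ recalled in \cite{HW} XXII. Theorem 413 (essentially: there are $O(x^{1/2})$ squares up to $x$, $O(x^{1/3})$ cubes, etc., and each contributes at most $\log x$, with the square contribution dominant).

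Taking this pointwise bound for granted, the deduction is a one-line calculation. For $(a,q)=1$ and $q\ll (\log x)^C$ we write
\[
\theta(x;q,a)=\psi(x;q,a)+O\bigl(x^{1/2}(\log x)^2\bigr)
\]
and substitute the Siegel--Walfisz asymptotic for $\psi$ to obtain
\[
\theta(x;q,a)=\frac{x}{\varphi(q)}+O\!\left(\frac{x}{(\log x)^{C}}\right)+O\bigl(x^{1/2}(\log x)^2\bigr).
\]

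For every fixed $C>0$, the secondary error $x^{1/2}(\log x)^2$ is $o\bigl(x/(\log x)^{C}\bigr)$ as $x\to+\infty$, so it is absorbed into the main error term, and the resulting implied constant continues to depend only on $C$. This yields the stated estimate. There is no serious obstacle: the whole analytic content is already contained in the Siegel--Walfisz theorem for $\psi$, and the passage to $\theta$ is just book-keeping for the prime-power contributions. The one point to verify carefully is that the bound on $\psi-\theta$ cited from \cite{HW} is indeed uniform in $q$ and $a$, which is immediate since we have discarded the congruence condition before estimating.
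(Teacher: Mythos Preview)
Your proof is correct and follows exactly the paper's approach: the paper also invokes the relation $\psi(x;q,a)=\theta(x;q,a)+O\bigl(x^{1/2}(\log x)^2\bigr)$ from \cite{HW} XXII, Theorem~413, and then substitutes into the Siegel--Walfisz estimate for $\psi$. Your write-up is in fact slightly more explicit than the paper's, which merely cites the $\psi$--$\theta$ comparison and states that the theorem ``may be adapted''.
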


\subsection{Equivalent at infinity}

We want to prove sum~\eqref{somme1} tends to infinity as $x \rightarrow + \infty$. We bounded~\eqref{somme1} from below by~\eqref{somme4}, which again can be bounded from below (thanks to the previous notations) by :

\begin{equation}
\sum_{\substack{ (\log x)^A < m,n < (\log x)^B \\ (m,n)=1\\ m,n \text{ odd}}}  \left[ \theta \left( x , a_{m,n}, m^2 n^2 \right) - \log(2) \right]
\end{equation}

Remark : the definition of $\theta$ that we previously gave also involves $p=2$, while our initial sum did not, which explains the appearance of a $\log(2)$ term, which makes no difference whatsoever in the end.
~\\

By Corollary \ref{coroSW}, if we set $C > 4B$, we have $m^2 n^2 \leq (\log x)^{4B} < (\log x)^C$, and we get 

\begin{equation}
\sum_{\substack{ (\log x)^A < m,n < (\log x)^B \\ (m,n)=1\\ m,n \text{ odd}}}  \theta \left( x , a_{m,n}, m^2 n^2 \right) =
\sum_{\substack{ (\log x)^A < m,n < (\log x)^B \\ (m,n)=1\\ m,n \text{ odd}}}  \left[ \frac{x}{\varphi(m^2 n^2)} + E_{m,n}(x) \right] 
\end{equation}

with $\displaystyle E_{m,n}(x) \ll \frac{x}{(\log x)^C}$, the implied constant depending only on $C$.

We can notice that we are able to use these estimates as we chose $m,n$ odd, so that the residue $a_{m,n}$ and the modulus $m^2 n^2$ are coprime.

\newpage
Thus, we need to prove :

\begin{enumerate} \item[(i)] that the main term

\[ \sum_{\substack{ (\log x)^A < m,n < (\log x)^B \\ (m,n)=1\\ m,n \text{ odd}}} \frac{x}{\varphi(m^2 n^2)}  \]

tends tends to infinity as $x \rightarrow + \infty$, and 
\item[(ii)] that the growth of the main term is not impeached by the error term~:

\[ \sum_{\substack{ (\log x)^A < m,n < (\log x)^B \\ (m,n)=1\\ m,n \text{ odd}}} E_{m,n}(x) - \sum_{\substack{ (\log x)^A < m,n < (\log x)^B \\ (m,n)=1\\ m,n \text{ odd}}} \log(2) \]

\end{enumerate}

\subsection{Estimation of the Main Term}

We are going to prove that

\[ \sum_{\substack{ (\log x)^A < m,n < (\log x)^B \\ (m,n)=1\\ m,n \text{ odd}}} \frac{x}{\varphi(m^2 n^2)} \underset{x \rightarrow \infty}{\longrightarrow} \infty \]
by bounding this term below and give an equivalent at infinity of this lower bound. First, we use the inequality $\varphi(m^2 n^2) \leq m^2 n^2$, so that

\begin{equation}
\sum_{\substack{ (\log x)^A < m,n < (\log x)^B \\ (m,n)=1\\ m,n \text{ odd}}} \frac{x}{\varphi(m^2 n^2)} \geq \sum_{\substack{ (\log x)^A < m,n < (\log x)^B \\ (m,n)=1\\ m,n \text{ odd}}} \frac{x}{m^2 n^2}
\end{equation}

Then, we get

\begin{align*}
&  \displaystyle \sum_{\substack{ (\log x)^A < m,n < (\log x)^B \\ (m,n)=1\\ m,n \text{ odd}}} \frac{1}{m^2 n^2} \\ 
 = &  \displaystyle \sum_{\substack{ (\log x)^A < (2m+1),(2n+1) < (\log x)^B \\ ((2m+1),(2n+1))=1}} \frac{1}{(2m+1)^2 (2n+1)^2} \\ 
 = & \displaystyle \sum_{ (\log x)^A < (2m+1), (2n+1) < (\log x)^B } \frac{1}{(2m+1)^2 (2n+1)^2} \\
& \qquad - \underbrace{ \sum_{\substack{ (\log x)^A < (2m+1),(2n+1) < (\log x)^B \\ ((2m+1),(2n+1))> 1}} \frac{1}{(2m+1)^2 (2n+1)^2} }_{(*)} \\
\geq & \displaystyle \sum_{ \frac{(\log x)^A - 1}{2} < m,n < \frac{(\log x)^B - 1}{2} } \frac{1}{(2m+1)^2 (2n+1)^2} \\
& \qquad - \underbrace{\sum_{2 \leq d \leq (\log x)^B} \sum_{\substack{ (\log x)^A < (2m+1), (2n+1) < (\log x)^B \\ d \mid 2m+1 \\ d \mid 2n + 1}} \frac{1}{(2m+1)^2 (2n+1)^2}}_{(**)} \\
\geq & \displaystyle \sum_{ \frac{(\log x)^A - 1}{2} < m,n < \frac{(\log x)^B - 1}{2} } \frac{1}{(2m+1)^2 (2n+1)^2} \\
& \qquad - \underbrace{ \sum_{2 \leq d \leq (\log x)^B} \sum_{\substack{ k,k' \text{ odd} \\ (\log x)^A < dk, dk < (\log x)^B }} \frac{1}{(dk)^2 (dk')^2} }_{(***)} \\
\end{align*}

Note that we have $(*) \leq (**) \leq (***)$, which explains the inequalities.

Thus, the latter is equal to 
\[ \left( \sum_{ \frac{(\log x)^A - 1}{2} < n < \frac{(\log x)^B - 1}{2} } \frac{1}{ (2n+1)^2} \right)^2  -  \sum_{2 \leq d \leq (\log x)^B} \frac{1}{d^4} \left( \sum_{\substack{ \frac{(\log x)^A - d}{2d} < n < \frac{(\log x)^B - d}{2d} }} \frac{1}{(2n+1)^2} \right)^2 \]

We now want to bound the last quantity from below. To do this, one uses lower and upper bounds which we get by comparisons beetwen series and integrals.

Specifically, we use the inequality
$\displaystyle \sum_{n=a}^{b} \frac{1}{(2n+1)^2} \geq \int_{a}^{b+1} \frac{\mathrm{d}t}{(2t+1)^2}$

so that we get 
\[ \sum_{ \frac{(\log x)^A -1}{2} < n < \frac{(\log x)^B - 1}{2} } \frac{1}{(2n+1)^2}
\geq \frac{1}{2 \left( (\log x)^A + 2 \right)} - \frac{1}{2 (\log x)^B}
 \]

In the same way, we use the inequality $\displaystyle \sum_{n=a}^{b} \frac{1}{(2n+1)^2} \leq \int_{a}^{b}  \frac{\mathrm{d}t}{(2t+1)^2} + \frac{1}{(2a+1)^2}$

to get 
\[
\displaystyle \sum_{\substack{ \frac{(\log x)^A - d}{2d} < n < \frac{(\log x)^B - d}{2d} }} \frac{1}{(2n+1)^2} \leq \displaystyle \frac{d}{2(\log x)^A } - \frac{d}{2(\log x)^B} + \frac{d^2}{(\log x)^{2A}}
 \]

Using the previous inequalities, we have

\allowdisplaybreaks

\begin{align*} &  \displaystyle \sum_{\substack{ (\log x)^A < m,n < (\log x)^B \\ (m,n)=1\\ m,n \text{ odd}}} \frac{1}{m^2 n^2} \\ 
\geq\;& \displaystyle \left( \sum_{ \frac{(\log x)^A - 1}{2} < n < \frac{(\log x)^B - 1}{2} } \frac{1}{ (2n+1)^2} \right)^2 \\
& \qquad -  \sum_{2 \leq d \leq (\log x)^B} \frac{1}{d^4} \left( \sum_{\substack{ \frac{(\log x)^A - d}{2d} < n < \frac{(\log x)^B-d}{2d} }} \frac{1}{n^2} \right)^2 \\ 
\geq\; & \displaystyle \left( \frac{1}{2 \left( (\log x)^A + 2 \right)} - \frac{1}{2 (\log x)^B} \right)^2 \\
& \qquad - \sum_{2 \leq d \leq (\log x)^B} \frac{1}{d^4} \left( \frac{d}{2(\log x)^A } - \frac{d}{2(\log x)^B} + \frac{d^2}{(\log x)^{2A}} \right)^2 \\
\geq\;& \displaystyle \frac{1}{4 \left( (\log x)^A + 2 \right)^2 } + \frac{1}{4 (\log x)^{2B} } - \frac{1}{2 \left( (\log x)^A + 2 \right) (\log x)^B } \\
& \displaystyle - \sum_{2 \leq d \leq (\log x)^B} \frac{1}{d^2} \left( \frac{1}{4(\log x)^{2A} } + \frac{1}{4(\log x)^{2B}} + \frac{d^2}{(\log x)^{4A}} + \frac{d}{(\log x)^{3A}} \right) \\
\geq\;& \displaystyle \frac{1}{4 \left( (\log x)^A + 2 \right)^2 } + \frac{1}{4 (\log x)^{2B} } - \frac{1}{2 \left( (\log x)^A + 2 \right) (\log x)^B } \\
& \displaystyle - \left( \sum_{2 \leq d \leq (\log x)^B} \frac{1}{d^2} \right) \left( \frac{1}{4(\log x)^{2A} } + \frac{1}{4(\log x)^{2B}} \right) \\
& \displaystyle - \left( \sum_{2 \leq d \leq (\log x)^B} \frac{1}{d} \right) \frac{1}{(\log x)^{3A}}  - \left( \sum_{2 \leq d \leq (\log x)^B} 1 \right) \frac{1}{(\log x)^{4A}} \\
\geq\;& \displaystyle \frac{1}{4 \left( (\log x)^A + 2 \right)^2 } + \frac{1}{4 (\log x)^{2B} } - \frac{1}{2 \left( (\log x)^A + 2 \right) (\log x)^B } \\
& \displaystyle - (\zeta(2) - 1) \left( \frac{1}{4(\log x)^{2A} } + \frac{1}{4(\log x)^{2B}} \right) \\
& \displaystyle - (1 + \log(\log(x)^B) ) \frac{1}{(\log x)^{3A}}  - \frac{(\log x)^B}{(\log x)^{4A}} \\
\end{align*}

and, if we take $A < B < 2A$, the latter is finally equivalent to $\displaystyle \frac{2 - \zeta(2)}{4} \frac{1}{(\log x)^{2A}}$ 

Thus, we proved that

\[ \sum_{\substack{ (\log x)^A < m,n < (\log x)^B \\ (m,n)=1\\ m,n \text{ odd}}} \frac{1}{m^2 n^2}  \]
 is larger than a quantity which is equivalent at infinity to
\[ \frac{2 - \zeta(2)}{4} \frac{1}{(\log x)^{2A}} \]

Subsequently, the main term
\[ \sum_{\substack{ (\log x)^A < m,n < (\log x)^B \\ (m,n)=1\\ m,n \text{ impairs}}} \frac{x}{\varphi(m^2 n^2)}  \]
is larger than a quantity which is equivalent at infinity to
\begin{equation} 
\frac{2 - \zeta(2)}{4} \frac{x}{(\log x)^{2A}} \label{mino}
\end{equation}

Thus, the main term tends to infinity as $x \rightarrow + \infty$, at least as fast as~\eqref{mino}.

\subsection{Error Terms}

Remember that $E_{m,n}(x) = \left| \theta(x; a, m^2 n^2 ) - \frac{x}{\varphi(m^2 n^2)} \right|$. According to Siegel-Walfisz, for a given $C > 0$ and every $q \leq (\log x)^C$, the following holds
\[ \left| \theta(x; a, q ) - \frac{x}{\varphi(q)} \right| = O \left( \frac{x}{(\log x)^C} \right) \]

Then, if $4B < C$, for every big enough $x$, we know that $m^2 n^2 < (\log x)^C$ as soon as $(\log x)^A < m,n < (\log x)^B$. So each $E_{m,n}(x)$ is dominated by $\displaystyle \frac{x}{(\log x)^C}$ in the sum \[ \sum_{\substack{ (\log x)^A < m,n < (\log x)^B \\ (m,n)=1\\ m,n \text{ odd}}} E_{m,n}(x)\] 
Considering that we get at most $(\log x)^{2B}$ terms, we have~:
\[ \sum_{\substack{ (\log x)^A < m,n < (\log x)^B \\ (m,n)=1\\ m,n \text{ odd}}} E_{m,n}(x) \ll (\log x)^{2B} \frac{x}{(\log x)^C} \]

Noticing that we chose $C$ such that $C > 4B$, we have $C > 2A + 2B$, i.e. $2B - C < - 2A$ and we can conclude that

\[ \sum_{\substack{ (\log x)^A < m,n < (\log x)^B \\ (m,n)=1\\ m,n \text{ odd}}} E_{m,n}(x) \ll \frac{x}{(\log x)^{2A}} \]

The last part of the error term, namely \[ \sum_{\substack{ (\log x)^A < m,n < (\log x)^B \\ (m,n)=1\\ m,n \text{ odd}}} \log(2) \] is at most $(\log x)^{2B}$ times $\log(2)$, so it may be bounded above by $(\log x)^{2B} \log(2)$, which is also dominated by$ \frac{x}{(\log x)^{2A}}$.

\subsection{Conclusion}

We proved 
\[ \sum_{\substack{p \text{ prime} \\ 3 \leq p < x}} \log(p) \left( \sum_{\substack{m \in \mathbb{N} \\  m < \sqrt{x+2} \\ p \in G(m) }} 1 \right) \times \left( \sum_{\substack{n \in \mathbb{N} \\ n < \sqrt{x-2} \\ p \in H(n) }} 1 \right)
\]

was bounded below by
\[ \underbrace{\sum_{\substack{ (\log x)^A < m,n < (\log x)^B \\ (m,n)=1\\ m,n \text{ odd}}} \frac{x}{\varphi(m^2 n^2)} }_{A(x)}  + \underbrace{ \sum_{\substack{ (\log x)^A < m,n < (\log x)^B \\ (m,n)=1\\ m,n \text{ odd}}} E(x)}_{O(A(x))} + \underbrace{ \sum_{\substack{ (\log x)^A < m,n < (\log x)^B \\ (m,n)=1\\ m,n \text{ odd}}} \log(2) }_{O(A(x))}  \]
with $A(x) \rightarrow \infty$, showing that our first sum tends to infinity as $x \rightarrow \infty$. \quad $\square$

\section{Open questions}
\begin{enumerate}

\item In order to prove Greenberg's conjecture for every integer $t$ and every odd prime $p$, one could try at first to exhibit $p$-rational totally real triquadratic number fields. This would require to exhibit a well chosen set of real quadratic fields, whose fundamental units can be explicitally found.

\item It is possible, being given a $p$-rational totally real multiquadratic field $K$ of order $2^t$ for every odd prime $p$, that we could adapt our analytic proposition to prove $K(i)$ is $p$-rationnal for an infinity of prime $p$ (probably under certain restricting conditions).

\item The analytic proposition may be improved : for example, one may consider a finite collection of positive integers $m_i$ (and relatively prime residues $r_i$) and search for primes $p$ such that
\[ \forall i,\quad p \equiv r_i \pmod{m_{i}^{2}} \quad \text{and}\quad m_{i} > (\log p)^A \]
One may also give a proper estimation of the number of such $p$, rather than the lower bound we gave.

In appendix, we prove an alternate (stronger) version (under GRH) of the analytic proposition \ref{propan}.
\end{enumerate}

\begin{appendix}

\section{A stronger analytic proposition under GRH}

For $A > 0$, we proved there exists infinitely many primes $p$ such that $(p+2)$ and $(p-2)$ both admit square factors larger than $(\log x)^A$. We can prove, under GRH, a stronger result. More specifically :
\begin{proposition}
Let $\varepsilon < \frac{1}{8} $. Suppose GRH holds true for $L(s,\chi)$ with $\chi \pmod{q}$. There exists infinitely many primes $p$ for which there exists $m,n \in \mathbb{N}^{\ast}$, such that
\[ \begin{cases} 
n^2 \mid p-2 \\
m^2 \mid p+2 \\
p^{\varepsilon} < n \\
p^{\varepsilon} < m 
\end{cases} \]
\end{proposition}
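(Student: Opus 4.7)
The plan is to follow the same four-step strategy used for Proposition \ref{propan} — set up a weighted prime-counting double sum, apply the Chinese Remainder Theorem to identify the inner count with $\theta(x; m^2 n^2, a_{m,n})$, invoke a prime-number-theorem-in-arithmetic-progressions estimate, and balance the main term against the error — but replace Siegel--Walfisz by its much stronger GRH counterpart, which is uniform for moduli up to $x$ rather than only $(\log x)^C$.

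First, for $0 < \varepsilon < \delta$ with $\delta$ to be chosen, I would consider
\[
S(x) := \sum_{\substack{p \text{ prime}\\ 3 \leq p < x}} \log(p) \Biggl(\sum_{\substack{m :\, m^2 \mid p+2 \\ p^\varepsilon < m}} 1\Biggr) \Biggl(\sum_{\substack{n :\, n^2 \mid p-2 \\ p^\varepsilon < n}} 1\Biggr),
\]
swap the order of summation, and restrict to coprime odd pairs $(m,n)$ with $x^\varepsilon < m, n < x^\delta$. Because $p < x$ forces $p^\varepsilon < x^\varepsilon < m$, the original constraint $p^\varepsilon < m$ is automatic on this restricted range, and similarly for $n$, so the restriction only decreases the sum. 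The CRT argument of the unconditional proof yields a residue $a_{m,n}$ coprime to $m^2 n^2$ such that the inner prime sum equals $\theta(x; m^2 n^2, a_{m,n})$ up to a negligible $O(\log x)$. Instead of Corollary \ref{coroSW}, I would then invoke the GRH-conditional estimate
\[
\theta(x; q, a) = \frac{x}{\varphi(q)} + O\bigl(x^{1/2} (\log x)^2\bigr),
\]
valid for all $(a,q)=1$ and all $q \leq x$ under the stated GRH hypothesis (cf.\ \cite{IWAKOW}); on our range $m^2 n^2 < x^{4\delta} < x$ as soon as $\delta < 1/4$.

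The main-term analysis then parallels Section 4.4 of the unconditional proof verbatim, with $(\log x)^A$ replaced by $x^\varepsilon$ and $(\log x)^B$ by $x^\delta$. Comparing the sums to integrals and checking that the diagonal correction from removing the coprimality condition remains of lower order — which requires $\delta < 2\varepsilon$ — one obtains
\[
\sum_{\substack{x^\varepsilon < m, n < x^\delta \\ (m,n) = 1,\ m, n \text{ odd}}} \frac{x}{\varphi(m^2 n^2)} \gg x^{1 - 2\varepsilon}.
\]
The accumulated error is bounded by the number of pairs times the per-pair GRH error:
\[
\sum_{\substack{x^\varepsilon < m, n < x^\delta \\ (m,n) = 1,\ m, n \text{ odd}}} O\bigl(x^{1/2}(\log x)^2\bigr) \ll x^{2\delta + 1/2} (\log x)^2.
\]
The error is beaten by the main term as soon as $1/2 + 2\delta < 1 - 2\varepsilon$, i.e.\ $\varepsilon + \delta < 1/4$.

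The main obstacle, and the source of the threshold $\varepsilon < 1/8$, is satisfying three constraints simultaneously: $\delta > \varepsilon$ (non-trivial truncation window), $\delta < 2\varepsilon$ (control of the $\gcd$-correction in the main-term lower bound), and $\varepsilon + \delta < 1/4$ (GRH error absorbed by the main term). The admissible window is $\delta \in \bigl(\varepsilon,\, \min(2\varepsilon,\, 1/4 - \varepsilon)\bigr)$, which is non-empty precisely when $\varepsilon < 1/8$. Fixing any such $\delta$, the argument concludes exactly as for Proposition \ref{propan}: $S(x)$ is bounded below by a quantity tending to infinity, and each prime $p$ contributing to $S(x)$ admits simultaneous square factors $m^2 \mid p+2$ and $n^2 \mid p-2$ with $m, n > x^\varepsilon > p^\varepsilon$, as required.
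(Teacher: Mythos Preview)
Your proposal is correct and follows essentially the same strategy as the paper's appendix proof: the same weighted sum, the same restriction to odd coprime pairs in a window $(x^\varepsilon, x^\alpha)$, the same GRH-conditional estimate $\theta(x;q,a)=x/\varphi(q)+O(x^{1/2}(\log x)^2)$, and the same balance $\alpha < \tfrac14 - \varepsilon$ yielding the threshold $\varepsilon < \tfrac18$. The only minor difference is that you carry over the auxiliary constraint $\delta < 2\varepsilon$ from the unconditional main-term computation of \S4.4, whereas the paper's appendix uses a slightly different series--integral bound that does not invoke it; either way the admissible window for the truncation parameter is non-empty precisely when $\varepsilon < \tfrac18$, so the conclusion is identical.
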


\begin{proof} The proof is similar to the previous one. 

In the same way, we consider the sum
\[
\sum_{\substack{p \text{ prime} \\ 3 \leq p < x}} \log(p) \Bigg( \sum_{\substack{m \in \mathbb{N} \\ m  < \sqrt{x+2} \\ m^2 \mid p + 2 \\ p^\varepsilon < m }} 1 \Bigg) \times \Bigg( \sum_{\substack{ n \in \mathbb{N} \\ n < \sqrt{x-2} \\ n^2 \mid p -2 \\ p^\varepsilon < n }} 1 \Bigg)
\]

Then, using similar calculations as before, we show that the previous sum is bounded below by

\[
\sum_{\substack{m,n \in \mathbb{N} \\ m < \sqrt{x+2} \\ n < \sqrt{x-2} \\ (m,n)=1 \\ m,n \text{ odd}}} \sum_{\substack{ p \text{ prime} \\ 3 \leq p < x \\ p \equiv a_{m,n} [m^2 n^2] \\  p < m^{1/\varepsilon} \\ p < n^{1 / \varepsilon}}} \log(p)
\]
which is larger (for $x$ big enough) than
\begin{equation} \label{sommea}
\sum_{\substack{ x^\varepsilon < m,n < x^\alpha \\ (m,n)=1 \\ m,n \text{ odd}}}  \sum^{p \text{ prime}}_{\substack{p \equiv a_{m,n} [m^2n^2] \\ 3 \leq p < x}} \log(p)
\end{equation}
for $\varepsilon < \alpha < \frac{1}{2}$.
~\\

Then, under GRH for $L(s,\chi)$ with $\chi \pmod{q}$, we have (see \cite{IWAKOW} \S 17.1) :
\[ \theta(x; m^2 n^2, a_{m,n}) = \sum_{\substack{p \text{ prime} \\ p \equiv a_{m,n} [m^2n^2] \\ p < x}} \log(p) = \frac{x}{\varphi(m^2 n^2)} + O \left( x^{1/2} (\log x)^2 \right)
\]

So~\eqref{sommea} is greater than 
\[  \sum_{\substack{ x^\varepsilon < m,n < x^\alpha \\ (m,n)=1 \\ m,n \text{ odd}}} \left( \frac{x}{\varphi(m^2 n^2)} + E_{m,n} - \log(2) \right)
\]
with $E_{m,n} \ll x^{1/2} (\log x)^2$.
~\\

First, we show the main term
\[
\sum_{\substack{ x^\varepsilon < m,n < x^\alpha \\ (m,n)=1 \\ m,n \text{ odd}}} \frac{x}{\varphi(m^2 n^2)}  
\]
is larger than a quantity equivalent to $\displaystyle \frac{2 - \zeta(2)}{4} x^{1 - 2 \varepsilon}$ at infinity.

Indead, we have
\begin{align*}
\sum_{\substack{ x^\varepsilon < m,n < x^\alpha \\ (m,n)=1\\ m,n \text{ odd}}} \frac{1}{m^2 n^2} & = \left( \sum_{ \frac{x^\varepsilon - 1}{2} < n < \frac{x^\alpha - 1}{2} } \frac{1}{ (2n+1)^2} \right)^2 \\
& \quad -  \sum_{2 \leq d \leq x^\alpha} \frac{1}{d^4} \left( \sum_{\substack{ \frac{x^\varepsilon - d}{2d} < n < \frac{x^\alpha - d}{2d} }} \frac{1}{(2n+1)^2} \right)^2 \\
& \geq \left( \frac{1}{2(x^\varepsilon +2)} - \frac{1}{2x^\alpha} \right)^2 -  \sum_{2 \leq d \leq x^\alpha} \frac{1}{d^4} \left( \frac{d}{2(x^\varepsilon - 2d)} - \frac{d}{2x^\alpha} \right)^2 \\
& \geq \frac{1}{4(x^\varepsilon + 2)^2} - \frac{1}{2(x^\varepsilon +2)x^\alpha} + \frac{1}{4 x^{2\alpha}} \\
& \quad -  \frac{\zeta(2)-1}{4 (x^\varepsilon - 4)^2} +  \frac{\zeta(2)-1}{2(x^\varepsilon - 2x^\alpha)x^\alpha} - \frac{\zeta(2) -1}{4x^{2\alpha}}
\end{align*}

Therefore we can consider the first error term $\sum_{\substack{ x^\varepsilon < m,n < x^\alpha \\ (m,n)=1 \\ m,n \text{ odd}}} E_{m,n}(x)$ with $E_{m,n}(x) \ll x^{1/2} (\log x)^2$ (the implied constant being independent of $E_{m,n}$). As we get at most $x^{2\alpha}$ terms which are $O \left( x^{1/2} (\log x)^2 \right)$ (under GRH), this error term is $O \left( x^{\frac{1}{2} + 2 \alpha} (\log x)^2 \right)$.
 Thus, we take $\alpha$ such that $\varepsilon < \alpha < \frac{1}{4} - \varepsilon$ (justifying the fact that we took $\varepsilon < \frac{1}{8}$), so that $\frac{1}{2} + 2 \alpha < 1 - 2 \varepsilon$. Then, this first error term is $O \left( \frac{2 -\zeta(2)}{4} x^{1 - 2\varepsilon} \right)$.

Finally, the second error term, namely
$\sum_{\substack{ x^\varepsilon < m,n < x^\alpha \\ (m,n)=1 \\ m,n \text{ odd}}} (\log 2)$ 
is at most $x^{2 \alpha}$ times $\log(2)$, and may be bounded above by $x^{2 \alpha} \log(2) = O \left( \frac{2 - \zeta(2) }{4} x^{1 - 2 \varepsilon}  \right)$ (as $\alpha$ has been set previously such that $2 \alpha < \frac{1}{2} < 1 - 2\varepsilon$)

In conclusion, we bounded below our first sum by :

\[
\underbrace{\sum_{\substack{ x^\varepsilon < m,n < x^\alpha \\ (m,n)=1 \\ m,n \text{ odd}}} \frac{x}{\varphi(m^2 n^2)}}_{A(x)}  + \underbrace{\sum_{\substack{ x^\varepsilon < m,n < x^\alpha \\ (m,n)=1 \\ m,n \text{ odd}}} E_{m,n} + \sum_{\substack{ x^\varepsilon < m,n < x^\alpha \\ (m,n)=1 \\ m,n \text{ odd}}} \log(2)}_{O(A(x))}
\]

with $A(x) \rightarrow + \infty$, which is sufficient to prove the proposition.

\end{proof}

\end{appendix}

\end{document}